%% file: main_interpolation.tex
\documentclass{article}

\usepackage{amssymb,amsthm,amsmath}
\usepackage{enumerate,bm,mathrsfs}
\usepackage{nicematrix}
\usepackage{tabularx}
\usepackage{svg}
\usepackage{tikz,graphicx}
\usepackage{tikz-network}
\usepackage{tikz-3dplot,pgfplots}
\usetikzlibrary{math}

\usetikzlibrary{backgrounds}
\usetikzlibrary{arrows}
\usetikzlibrary{shapes,shapes.geometric,shapes.misc}
\tikzstyle{tikzfig}=[baseline=-0.25em,scale=0.5]
\pgfdeclarelayer{edgelayer}
\pgfdeclarelayer{nodelayer}
\pgfsetlayers{background,edgelayer,nodelayer,main}
\usepackage{tikzscale}

\usepackage{caption, subcaption}
\usepackage{geometry}
\usepackage{hyperref}
\usepackage{cleveref}

\usepackage{geometry}
\providecommand{\keywords}[1]
{
  \small
  \textbf{\textit{Keywords---}} #1
}

\providecommand{\msc}[1]
{
  \small
  \textbf{\textit{MSC---}} #1
}

\pgfplotsset{compat=1.10}
\usetikzlibrary{angles}

\newtheorem{theorem}{Theorem}[section]
\newtheorem{corollary}[theorem]{Corollary}
\newtheorem{lemma}[theorem]{Lemma}
\newtheorem{proposition}[theorem]{Proposition}

\theoremstyle{definition}
\newtheorem{example}[theorem]{Example}
\newtheorem{definition}[theorem]{Definition}
\newtheorem{remark}[theorem]{Remark}

\input{macros}

\title{Interpolation in path-connected metric spaces using splines on embedded graphs}

\author{
E. Fuselier\thanks{High Point University, High Point, NC.}
\and
J. P. Ward\thanks{North Carolina A\&T State University, Greensboro, NC.}
\thanks{Corresponding author, \url{jpward@ncat.edu}}
\and
J. Zhao\thanks{University of Minnesota, Minneapolis, MN.}
}

\date{\today}

\begin{document}

\maketitle

\begin{abstract}
We study
a local graph-based interpolation framework for data defined on graph vertices embedded in a metric space. The resulting interpolants are constructed from variational graph splines defined on small neighborhoods, yielding continuous-domain extensions of the graph data. We analyze their behavior through energy estimates, Lipschitz continuity, and stability properties, and compare them with $k$-Nearest Neighbor ($k$NN) interpolation
as a natural baseline due to its simplicity and widespread use.
Numerical experiments on the manifold of symmetric positive definite matrices illustrate the relative performance of the proposed local interpolation schemes and $k$NN interpolation across varying levels of smoothness and noise.
\end{abstract}

\keywords{Graph splines, Embedded graphs, Graph Laplacian, Lipschitz constants, $k$NN}

\msc{05C90, 41A05, 41A15, 65D07}

\section{Introduction}

Graph-based models have become an important tool in a wide range of applications, including neural networks, image processing, and chemical analysis, to name a few \cite{hariri2024graph,leus2023graph,nallapareddy2024towards,shuman2013emerging}. When the graph vertices are embedded in a metric space, one may want to use the graph information to explore a quantity of interest in the surrounding space. In this paper, we study a simple local interpolation scheme for this setting. Such interpolants may be useful in semi-supervised learning \cite{belkin2006manifold,zhu2003semi}, inpainting and other signal reconstruction problems in graph signal processing \cite{ortega2017graph}, and optimization of expensive black-box functions on manifolds, where local surrogate models are often employed \cite{vu2017surrogate}.

The procedure studied here is based on variational graph splines \cite{pesenson2008sampling,pesenson2009,ward2020}, which are closely related to kernel interpolation techniques on graphs \cite{erb2022_GBF}. For each evaluation point, we construct a small graph consisting of the point together with nearby graph vertices, and define the interpolated value as the graph spline that minimizes a discrete-derivative energy subject to the function values. As the evaluation point varies, these local constructions produce a continuous-domain interpolant in the embedding space. Note the similarity to the standard $k$-Nearest Neighbor ($k$NN) interpolation scheme \cite[Chapter 8]{learning1997tom}, which is often used to interpolate missing data in abstract settings. However, due to the incorporation of an energy minimization property, the interpolant described here tends to capture underlying trends in the data that may be missed by $k$NN. In this paper, we investigate several properties of the local graph interpolants, including their energy behavior, Lipschitz continuity, stability, and comparison with $k$NN interpolation.

The paper is organized as follows. After reviewing the necessary background, in \Cref{sec:interp} we describe the construction of local graph interpolants. The bulk of the paper is in \Cref{sec:properties}, where we first analyze the interpolants along geodesics, including an energy estimate for one of the interpolants. We then briefly justify the choice of graph weights used throughout the construction. Next, we investigate the Lipschitz behavior of the interpolants and their
$k$NN counterpart on path subgraphs through explicit formulas. We then establish stability estimates for the corresponding interpolation operators on general graphs. Finally, in \Cref{sec:experiments} we present numerical experiments
on the manifold of symmetric positive definite matrices that illustrate and compare the methods under varying levels of smoothness and noise. The experiments suggest that each has potential advantages, depending on the smoothness of the underlying data and the level of noise that is present.

\section{Preliminaries}
\interfootnotelinepenalty=10000

In this section, we recall some standard definitions and establish the scene for the interpolation problem.

\subsection{Graph Prerequisites}

Consider an undirected, weighted graph $\graph$ with vertex set $\vertices$ and edge set $\edges$.
We let $A$ denote the weighted adjacency matrix and $D$ be the diagonal matrix where each diagonal entry is the corresponding row sum of $A$. We denote an $\abs{\edges}\times \abs{\vertices}$ signed incidence matrix by $M$, where row $e=(u,v)$ has a $1$ in column $u$ or $v$ and a $-1$ in the other column. The $\abs{\edges}\times \abs{\edges}$ edge weight matrix is denoted $W$; this is the diagonal matrix containing the weights of the edges. We denote the adjacency of vertices $u,v$ by $u\sim v$, and the degree of vertex $u$ is denoted
\begin{equation}\label{eq:vertex_degree_definition}
d_u
=
\sum_{v\sim u} \weight{u}{v},
\end{equation}
where $\weight{u}{v}$ is the weight of edge $(u,v)$.

Derivative matrices on graphs will be important in what follows. The graph Laplacian plays the role of a second-order derivative\footnote{This is somewhat misleading as the weights do not necessarily match those in the finite difference that is used to define the derivative in the underlying space.} on the graph. There are several conventions for the graph Laplacian. Possibly the most common is the \emph{standard Laplacian}, given by $\lap := D-A$. If $f:\vertices\rightarrow \mathbb{R}$ (i.e. a $|\vertices|\times 1$ vector), we
have the useful vertex-based representation
\begin{equation}\label{eq:lapf_expanded}
\lap f(u) = \sum_{v\sim u} (f(u)-f(v))\weight{u}{v}.
\end{equation}
Normalizations of the standard Laplacian include the \emph{normalized Laplacian} and the \emph{random walk Laplacian}, respectively given by
\begin{equation}\nonumber
\nlap: = D^{-1/2}\lap D^{-1/2}
\quad\mbox{and}\quad
\rwlap:=D^{-1}\lap.
\end{equation}
In this paper, we will focus on the Standard and random walk varieties.

Note that the product $\fodgrad:=WM$ acts as a first-order derivative on the graph: $\fodgrad f$ assigns a value to each edge that is an approximation of a first-order directional derivative. Another first-order derivative is given by $\fodlapfactor:=W^{1/2}M$. It is not hard to show (see \cite{benedetto2015graph}, for example) that this derivative satisfies the equation
\begin{equation}\nonumber
\fodlapfactor^T\fodlapfactor = (W^{1/2}M)^\top(W^{1/2}M)=\lap.
\end{equation}

We will view most of these derivatives as operators on $\ell_2(\vertices)$, where $\vertices$ can be inferred as the domain of the function that the derivative is acting on. However, for the random walk Laplacian it is convenient to work on the weighted space $\wspd$, with inner product defined by
\[\langle f, g\rangle_{\wspd} := f^T Dg.\]
Also, note that while the matrix
$\rwlap$ is not symmetric in general, $\rwlap:\wspd\rightarrow\wspd$ is a self-adjoint operator \cite[Lemma 2.8]{grigor18}.

\subsection{Metric Space Setting}
Let $(\metspace,\dist_\metspace)$ be a path-connected metric space. Our aim is to approximate a function $f$ living on this metric space, and we assume that function values are expensive to compute. Suppose that we know $f$ on some node set $\nodes=\set{x_i}_{i=1}^n\subset\metspace$, and that the nodes are vertices in a weighted graph $\graph$. Let $x\in \metspace$ be a point where an approximation to $f$ is desired.
To determine the interpolant at this location, we begin by incorporating $x$ with the graph $\graph$ to create a new graph $\graphx$ with vertex set $\localnodes\cup\{x\}$ (see \Cref{fig:graphx})\footnote{We are being somewhat vague here on purpose to accommodate the multitude of ways this could be accomplished. One might add edges from $x$ to its nearest neighbors from $X$, or, as we will consider later in the paper, we could delete an edge from $\graph$ and add edges from $x$ to each vertex of the deleted edge.}.
The interpolated value will then be determined by a graph spline interpolation procedure
(see \Cref{sec:interp}), entirely based on information from a local subgraph $\subgraphx$.

For this paper, we use the metric on $\metspace$ to determine the edge weights on the graphs. Specifically, the weight on edge $(u,v)$, is defined by the inverse of the distance between the vertices, i.e.
$\weight{u}{v}:=1/\dist_\metspace(u,v)$, a choice whose motivation is discussed in \Cref{subsec:weightanddistances}. This induces a metric on the graph as follows. The graph distance between adjacent vertices $u$ and $v$  is simply defined to be $\dist_\metspace(u,v)$.  If $u$ is not adjacent to $v$, the graph distance is the sum of the edge lengths on a shortest path in the graph. Like the metric $\dist_\metspace$, we use $\dist$ to denote the graph distance functions and use subscripts to indicate the underlying graph.
In most cases, we are inserting new points $x$ on geodesics of $\metspace$, which gives
$\dist_{\graph}=\dist_{\graphx}$, and this is necessary for some results. However, when we consider inserting $x$ off of a geodesic, these distances
may not coincide.

\input{figures/graph.tikzstyles}
\begin{figure}[ht!]
\centering
\begin{subfigure}[b]{0.3\textwidth}
\includegraphics[width=\textwidth]{figures/graph.tikz}
\caption{$\graph$}
\label{fig:graph}
\end{subfigure}
\begin{subfigure}[b]{0.3\textwidth}
\includegraphics[width=\textwidth]{figures/graphx.tikz}
\caption{$\graphx$}
\label{fig:graphx}
\end{subfigure}
\begin{subfigure}[b]{0.3\textwidth}
\includegraphics[width=\textwidth]{figures/subgraphx.tikz}
\caption{$\subgraphx$}
\label{fig:subgraphx}
\end{subfigure}
\caption{The three graphs used in the interpolation procedure: an original graph $\graph$ with data sites as vertices, a slightly altered graph $\graphx$ which includes the evaluation point $x$, and a local subgraph $\subgraphx$, which in (c) consists of $x$, the shaded vertices, and bold edges. In this example, note that an edge from $\graph$ was deleted in the construction of $\graphx$.}
\label{fig:threegraphs}
\end{figure}

\section{Construction of the Interpolant}
\label{sec:interp}

Our interpolation procedure is based on graph splines, which are analogous to classical splines \cite{pesenson2009}. First we outline the method in a general graph setting, following the notation in \cite{fuselier2024}. Suppose we are working on a graph whose vertex set is a disjoint union $\vertices = \kverts\cup\uverts$, where $\kverts$ is the set of vertices at which the function values are prescribed (or ``known" vertices), and $\uverts$ is the set of interpolation locations (or ``unknown" vertices). Given function values of a function $f$ on $\kverts$, our goal is to determine the values of $f$ on $\uverts$. We denote the vector of known values as $f_\kverts$.

Graph spline interpolants are determined by minimizing an energy function given by some graph derivative matrix $\derivative$. The total energy will be measured in some norm $\norm{\cdot}$, which for our purposes is usually $\ell_2(\vertices)$ or $\wspd$. It will be convenient to reindex the columns of $\derivative$ so that
\[\derivative
=
\begin{bmatrix}
\derivative_{\uverts} & \derivative_{\kverts}
\end{bmatrix},
\]
where the submatrices consist of the columns associated with $\uverts$ and $\kverts$, respectively. With this, we can determine $f_\uverts$ (the vector of interpolated function values at the unknowns) via
\begin{equation}\label{eq:lst_squares_general}
    f_\uverts :=  \argmin_{y\in\mathbb{R}^{|\uverts|}} \norm{\derivative_{\uverts}y +\derivative_{\kverts}f_\kverts}^2.
\end{equation}
For the setting in this paper, we study a point-by-point construction in which each evaluation point is incorporated separately, i.e.  $\kverts = X$ and the unknown vertices consist of the one-point set $\uverts = \{x\}$. This brings us to the following definition.
\begin{definition}
Let $x\in \metspace$, $\derivative$ be a derivative matrix for a graph with vertices $\verticesx=X\cup \{x\}$, and $\norm{\cdot}$ be a norm on $\mathbb{R}^{|\vertices|}$. Letting $K=X$ and $U=\{x\}$, the point-by-point variational interpolant to $f$ on $X$ associated with $\derivative$ at $x$ is defined to be
\begin{equation}\label{eq:lst_squares}
\interp{\derivative}(x):=
\argmin_{y\in\mathbb{R}} \norm{\derivative_{\uverts}y +\derivative_{\kverts}f_\kverts}^2.
\end{equation}
\noindent At known vertices $x_i$, $\interp{\derivative}(x_i)$ is defined to be the function value $f(x_i)$.
\end{definition}
\noindent In what follows we will refer to this interpolant simply as ``the interpolant'' or ``the spline interpolant.''
\begin{remark}
    For ease of notation we neglected to decorate the symbol $\interp{\derivative}$ in a way that distinguishes which norm the energy is measured in. This should not cause confusion since almost all splines we consider here will use the $\ell_2$ norm, the exception being the spline associated with the random walk Laplacian, which uses $\wspd$.
\end{remark}

Using the normal equations for \eqref{eq:lst_squares} in the $\ell_2$ norm, we may write the solution as
\begin{equation}\label{eq:rational_function}
\interp{\derivative}(x)
=
\frac{-\derivative_{\uverts}^{\top}\derivative_{\kverts}f_\kverts}{\derivative_{\uverts}^{\top}\derivative_{\uverts}}.
\end{equation}
\noindent Note that the column $\derivative_\uverts$ associated with $x$ will only have nonzero entries associated with vertices that are neighbors of $x$ for lower order derivatives
(and neighbors of neighbors of $x$ if the derivative is higher order, and so on.). Thus from the products $\derivative_{\uverts}^{\top}\derivative_{\kverts}f_\kverts$ and $\derivative_U^\top \derivative_U$ in \eqref{eq:rational_function} it can be deduced that the interpolated value $\interp{\derivative}(x)$ only depends on nearest neighbors. This highlights the similarity with the $k$NN interpolant. The distinction is that the spline interpolant uses the graph structure to incorporate more of the trends in the data than $k$NN, which we will illustrate in a simple example after the following remarks.
\begin{remark}
    Note that since the distances and weights will change for different points $x$, the derivative submatrices are functions of $x$. If we work in a neighborhood of a fixed set of points $X$ and $\graphx$ has a similar structure for all $x$ considered, these matrices will have the same structure, with weights of edges containing $x$ appropriately updated. In the appendix, we use this fact to derive closed-form formulas for the spline interpolants on certain graphs.
\end{remark}

\begin{remark}
As a global interpolant, there would be non-smooth transition points where the graph structure (including the neighbors of neighbors) would change. This could be addressed, possibly using a partition of unity; see, for example, \cite{cavoretto2021partition}. However, our current focus is the local construction.
\end{remark}

\begin{example}\label{ex:stdlaprealline}
Suppose our metric space is the real line with the usual notion of distance, and we have points
$x_1<x_2<x_3<x_4$ where function values are known. Considering an arbitrary $x$ from the interval $(x_2,x_3)$, we form the weighted path graph as in \Cref{fig:graph_example_path} with distances on the graph inherited from the real line; for example, $\dist_{\graphx}(x_1,x_{2}) = x_{2}-x_1$. Using the standard Laplacian on this graph, given the data $f_1,f_2,f_3,f_4$ at the nodes, we construct the interpolant $\interp{\lap}(x)$.
A closed-form expression for this interpolant is derived
in \Cref{subsec:formula_path_subgraph_laplacian} for similar path graphs, where the interpolant is expressed in terms of data and distances. We can see from the formula \eqref{eq:pathlapinterp_appendix} that the interpolant is a rational function.
To our knowledge, this provides a new graph-based mechanism for constructing rational interpolants.
Specific examples are graphed in \Cref{fig:example_1d_standard_vs_kNN}, where we see that the weighted average definition of $k$NN (see \Cref{sec:kNN}) leads to what some might consider an underestimate, while $\interp{\lap}$ makes more of an attempt to capture the trends in the data.
\end{example}

\tikzmath{\vertexsize = .4;}

\begin{figure}[ht!]
\centering
\begin{tikzpicture}
\Vertex[label=$x$,shape=rectangle,opacity=0,size=\vertexsize]{x}

\Vertex[x=-2,y=0,label=$x_1$,opacity=0,size=\vertexsize]{x1}
\Vertex[x=-1,y=0,label=$x_2$,opacity=0,size=\vertexsize]{x2}
\Vertex[x=1,y=0,label=$x_3$,opacity=0,size=\vertexsize]{x3}
\Vertex[x=2,y=0,label=$x_4$,opacity=0,size=\vertexsize]{x4}

\Edge[](x1)(x2)
\Edge[](x2)(x)
\Edge[](x)(x3)
\Edge[](x3)(x4)
\end{tikzpicture}
\caption{\textbf{Path Graph}: Circles represent vertices where function values are known. The rectangle represents the vertex where we want to interpolate the function value.}
\label{fig:graph_example_path}
\end{figure}

\begin{example}
In the case of where the derivative operator is the random walk Laplacian, i.e. $\derivative = \rwlap$, we use the weighted $\wspd$ norm. Thus the normal equations for \eqref{eq:lst_squares} become
\begin{equation}\label{eq:rational_function_weighted}
\interp{\rwlap}(x)
=
\frac{-\rwlap(\uverts)^{\top}D\rwlap(\kverts)f_\kverts}{\rwlap(\uverts)^{\top}D\rwlap(\uverts)} = \frac{-\lap_{\uverts}^{\top}D^{-1}\lap_{\kverts}f_\kverts}{\lap_{\uverts}^{\top}D^{-1}\lap_{\uverts}},
\end{equation}
where $D$ is the degree matrix, and $\rwlap(K)$ consists of the columns of $\rwlap$ associated with the vertices in $K$, etc. Using the same path graph in the previous example, we construct $\interp{\rwlap}$ (for a closed-form formula in this case, see \Cref{subsec:formula_path_subgraph_rwlap}). In \Cref{fig:example_1d_standard_vs_kNN}, this interpolant is plotted along with the standard Laplacian spline and the weighted $k$NN
interpolant. Notice that, like the standard Laplacian spline, the random walk spline attempts to follow trends in the data; however, the plot on the right of the figure shows that it avoids a potentially unwanted overshoot.
\end{example}

These examples hint at the different behavior of the methods depending on the smoothness and noise characteristics of the data. In particular, the spline interpolants seem particularly well suited to following smooth trends, while $k$NN offers advantages when working with noisy data. Also, the muted overshoots of the random walk spline indicate resistance to oscillatory behavior, which suggests that the random walk spline might be a nice compromise between capturing underlying trajectories and resistance to noise. We explore these points in \Cref{sec:experiments}.

\begin{figure}[ht!]
\centering
\includegraphics[width=.5\textwidth]{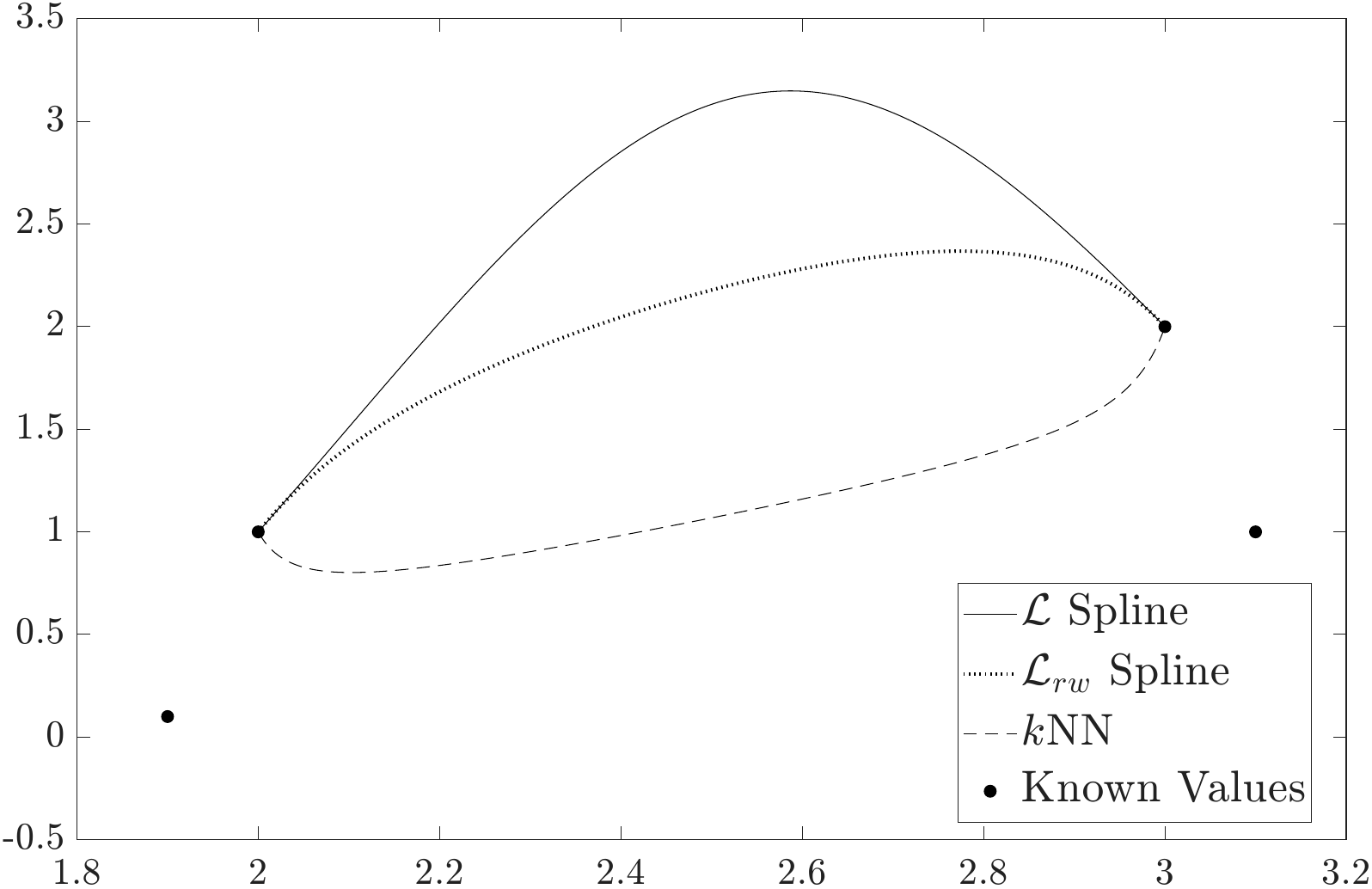}\includegraphics[width=.5\textwidth]{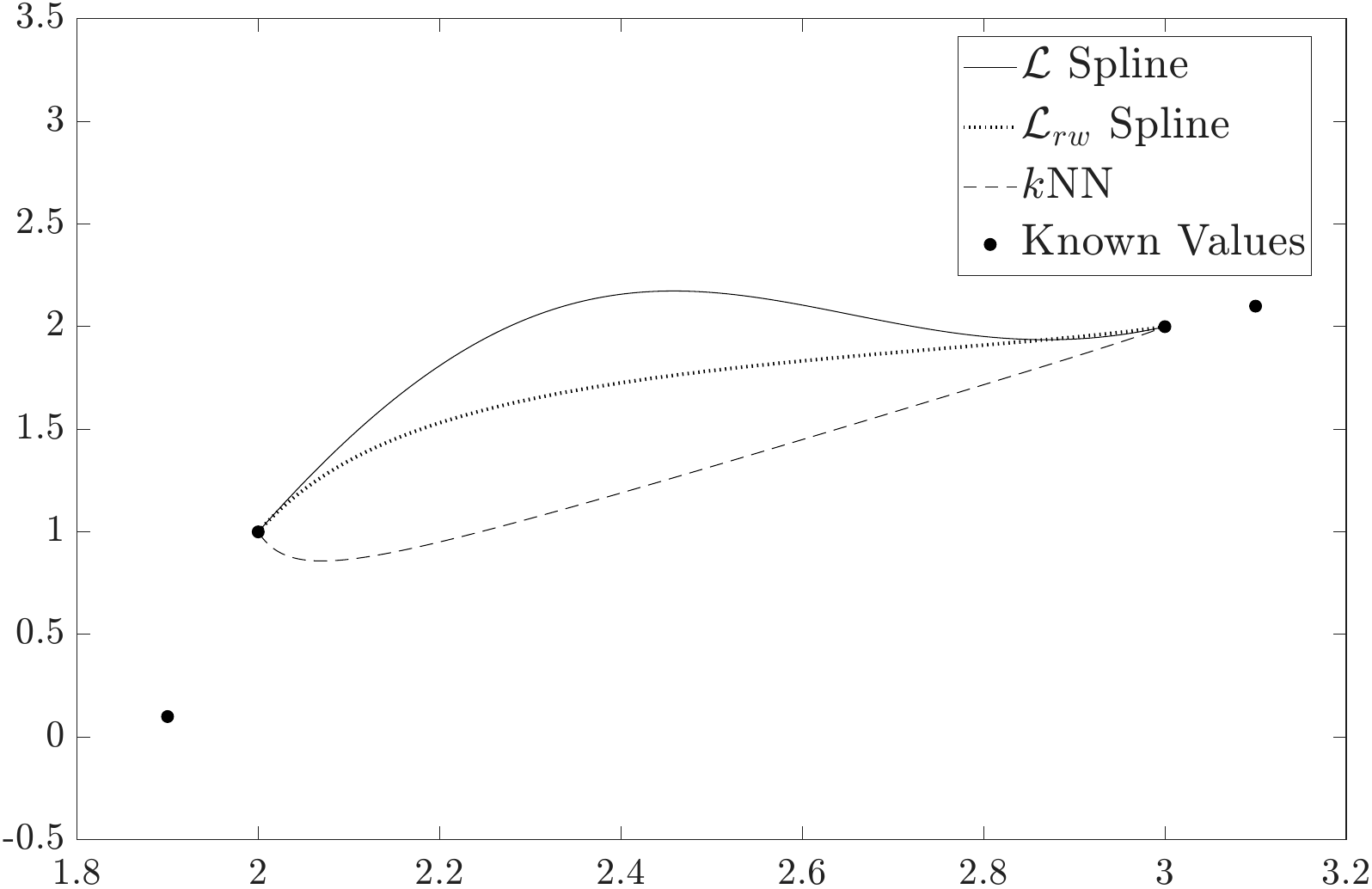}
\caption{Examples illustrating some qualitative differences between the Laplacian splines and the $k$NN interpolant on a path graph.}
\label{fig:example_1d_standard_vs_kNN}
\end{figure}

Before moving on, we reiterate that the interpolants we are considering are defined one point at a time, but one can use \eqref{eq:lst_squares_general} to interpolate at multiple points simultaneously
\cite{fuselier2024,ward2020}. However, the resulting interpolants are not the same; see \Cref{fig:example_1d_standard_vs_2point}.
In fact, if the evaluation points are equally spaced, the multi-point interpolant appears to converge to the cubic Hermite interpolant as the number of points increases. For the Hermite interpolant, the derivative information at the endpoints of the interval $[x_2,x_3]$ is determined by the slopes of the line segments connecting the two points furthest left or furthest right, respectively.

\begin{figure}[ht!]
\centering
\includesvg[width=0.5\textwidth]{figures/twoptinterp1.svg}\includesvg[width=0.5\textwidth]{figures/hermite.svg}
\caption{
\textbf{Left:} The standard Laplacian interpolant on the real line (computed one point at a time) compared to the spline interpolant computed by using two evaluation points in the variational construction.
\textbf{Right:} The cubic Hermite interpolant on the real line compared to the multi-point spline interpolant computed by using 24 evaluation points in the variational construction.
}
\label{fig:example_1d_standard_vs_2point}
\end{figure}

\section{Properties of the Interpolants}\label{sec:properties}

In this section, we investigate several properties of the interpolation procedures introduced in \Cref{sec:interp} together with linear interpolants, which we define below.
\begin{definition}
For points $x_1,x_2\in\metspace$ with data values $f_1,f_2$, we define the \emph{linear interpolant} via
\begin{equation}\nonumber
\interplinear(x)
=
\frac{f_1\dist(x,x_2)
+
f_2\dist(x_1,x)}{\dist(x_1,x) +\dist(x,x_2)},
\end{equation}
where $x\in \metspace$.\footnote{Although the formula defines $\interplinear$ on all of $\metspace$, it should not be interpreted as a globally linear function. However, it is linear with respect to arclength when restricted to a geodesic joining $x_1$ and $x_2$.}
\end{definition}
\noindent The section begins by establishing an energy estimate for the standard Laplacian interpolant on edges and briefly justify the choice of edge weights. We then derive estimates for the Lipschitz constants of the $k$NN, random walk, and standard graph spline interpolants on path subgraphs. Finally, we establish stability estimates for the interpolation procedures on general graphs.

\subsection{Interpolation on Edges}

Consider an edge $(x_1,x_2)$ in a graph $\graph$, and let $x$ be a point on
a geodesic in the metric space between the pair.
Now construct the graph $\graphx$ by removing that edge and inserting edges $(x_1,x)$ and $(x,x_2)$.
We wish to show that the Laplacian interpolant $\interp{\lap}$ on $\graph_x$ is at least as smooth as the original function $f$ on $\graph$.

Our arguments will use linear interpolants. Consider the simple path subgraph $\subgraphx$ consisting of two data sites $x_1$, $x_2$, each connected to the unknown site $x$ (see \Cref{fig:graph_example_shortpath}). Below we see that, for this subgraph, we can view the linear interpolant $\interplinear$ as $\interp{\fodlapfactor}$, where $\fodlapfactor$ is the first order differential operator $\fodlapfactor=W^{1/2}M$.

\begin{figure}[ht!]
\centering
\begin{tikzpicture}
\Vertex[label=$x$,shape=rectangle,opacity=0,size=\vertexsize]{x}

\Vertex[x=-1,y=0,label=$x_1$,opacity=0,size=\vertexsize]{x1}
\Vertex[x=1,y=0,label=$x_2$,opacity=0,size=\vertexsize]{x2}

\Edge[](x1)(x)
\Edge[](x)(x2)
\end{tikzpicture}
\caption{\textbf{Short Path Graph}: Circles represent vertices where function values are known. The rectangle represents the vertex where we want to interpolate the function value.}
\label{fig:graph_example_shortpath}
\end{figure}

\begin{proposition}\label{prop:WM_linear_interpolant}
Consider the subgraph $\subgraphx$ of \Cref{fig:graph_example_shortpath},
with data values $f_1,f_2$ at vertices $x_1,x_2$.
The interpolant $\interp{\fodlapfactor}$ is the linear interpolant $\interplinear$.
\end{proposition}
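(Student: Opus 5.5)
The plan is to write down $\fodlapfactor=W^{1/2}M$ explicitly for the short path graph of \Cref{fig:graph_example_shortpath} and then evaluate the closed-form solution \eqref{eq:rational_function} directly. The graph has two edges, $(x_1,x)$ and $(x,x_2)$, with weights $w_1=1/\dist(x_1,x)$ and $w_2=1/\dist(x,x_2)$. I will order the vertices as $\uverts=\{x\}$ and $\kverts=\{x_1,x_2\}$ and choose an orientation for each row of $M$; the sign convention does not affect the result, since flipping a row of $M$ leaves every entry of $\fodlapfactor^\top\fodlapfactor=\lap$ unchanged. With these choices I expect
\[
\fodlapfactor_\uverts=\begin{bmatrix} w_1^{1/2}\\ -w_2^{1/2}\end{bmatrix},\qquad
\fodlapfactor_\kverts=\begin{bmatrix} -w_1^{1/2} & 0\\ 0 & w_2^{1/2}\end{bmatrix}.
\]

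The key steps are then the two products appearing in \eqref{eq:rational_function}. The denominator is
\[
\fodlapfactor_\uverts^\top\fodlapfactor_\uverts=w_1+w_2 .
\]
The numerator is
\[
-\fodlapfactor_\uverts^\top\fodlapfactor_\kverts f_\kverts=w_1f_1+w_2f_2 .
\]
An equivalent route avoids the matrix altogether. The energy $\norm{\fodlapfactor_\uverts y+\fodlapfactor_\kverts f_\kverts}^2$ equals
\[
w_1(y-f_1)^2+w_2(y-f_2)^2,
\]
and minimizing this in $y$ gives the same weighted average
\[
\interp{\fodlapfactor}(x)=\frac{w_1f_1+w_2f_2}{w_1+w_2}.
\]

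The final step is to substitute the inverse-distance weights. Multiplying the numerator and denominator by $\dist(x_1,x)\,\dist(x,x_2)$ gives
\[
\interp{\fodlapfactor}(x)=\frac{f_1\dist(x,x_2)+f_2\dist(x_1,x)}{\dist(x,x_2)+\dist(x_1,x)},
\]
which is exactly $\interplinear(x)$.

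I do not expect a real obstacle. The only care needed is bookkeeping: keep the signs in $M$ consistent, and note that the weight $w_1$ attached to $f_1$ becomes the distance $\dist(x,x_2)$ after clearing denominators. One should also note that $x\neq x_1,x_2$, so that the weights are finite. At $x=x_i$ the interpolant is defined as $f(x_i)$, which agrees with $\interplinear$ there.
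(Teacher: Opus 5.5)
Your proposal is correct and essentially the paper's proof. The paper directly minimizes $w_1(y-f_1)^2+w_2(y-f_2)^2$ with the inverse-distance weights and reads off the weighted average as the linear interpolant, which is your ``equivalent route''; your matrix computation via \eqref{eq:rational_function} is the same calculation written through the normal equations.
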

\begin{proof}
The interpolant is formed by solving \eqref{eq:lst_squares}, which in this case involves minimizing the expression
\begin{align*}
\abs{(y-f_1)\weight{x}{x_1}^{1/2}}^2
+
\abs{(f_2-y)\weight{x}{x_2}^{1/2}}^2
&=
\abs{\frac{y-f_1}{\dist(x,x_1)^{1/2}}}^2
+
\abs{\frac{f_2-y}{\dist(x,x_2)^{1/2}}}^2\\
&=
\frac{y^2-2yf_1+f_1^2}{\dist(x,x_1)}
+
\frac{y^2-2yf_2+f_2^2}{\dist(x,x_2)}.
\end{align*}
This occurs for
\begin{equation}\nonumber
y
=
\frac{f_1\dist(x,x_2)+f_2\dist(x,x_1)}{\dist(x_1,x)+\dist(x,x_2)},
\end{equation}
so $\interp{\fodlapfactor}(x)$ is the linear interpolant.
\end{proof}

\begin{remark}
The fact that $\interp{\fodlapfactor}$ creates a linear interpolant relies on and provides justification for the choice
$\weight{x}{y} = \dist(x,y)^{-1}$. Further justification, based on how the standard Laplacian interpolant $\interp{\lap}$ preserves harmonic functions, is in \Cref{subsec:weightanddistances}.
\end{remark}

\begin{remark}
The linear interpolant is also an $\ell_1$ and $\ell_{\infty}$ spline (suitably defined) with respect to the derivative $\fodgrad=W M$. However, in general, there is not a unique spline solution in these norms. See \cite{fisher1975spline,unser2017splines} for continuous-domain discussion of this point.
\end{remark}

The next result shows that the linear interpolant preserves energy. To avoid cumbersome notation, we use the same symbol $\lap$ for the graph Laplacian on both $\graph$ and the augmented graph $\graphx$.\footnote{Although these are technically different operators, their domains will always be clear from the function to which they are applied.}

\begin{lemma}\label{lem:splitEdge_equalLaplacian}
Let $X=\set{x_i}_{i=1}^n\subset \metspace$ and suppose $x$ is on a shortest path in $\metspace$ from $x_1$ to $x_2$. The linear interpolant
$\interplinear$ on $\graphx$
to a function
$f$ on $\graph$
satisfies
\begin{align*}
\begin{split}
\lap \interplinear (x_i)
&= \lap f(x_i),
\quad \text{for } 1\leq i \leq n\\
\lap \interplinear (x)
&= 0.
\end{split}
\end{align*}
\end{lemma}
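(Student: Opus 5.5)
The proof is a direct computation with the vertex-based formula \eqref{eq:lapf_expanded}, applied separately to the new vertex $x$, to the endpoints $x_1,x_2$ of the split edge, and to all other vertices. Set $a=\dist(x_1,x)$ and $b=\dist(x,x_2)$. Since $x$ lies on a shortest path from $x_1$ to $x_2$, we have $a+b=\dist(x_1,x_2)$. In $\graphx$ the edge $(x_1,x_2)$ of weight $1/(a+b)$ is replaced by the edges $(x_1,x)$ with weight $1/a$ and $(x,x_2)$ with weight $1/b$. The function $\interplinear$ on $\graphx$ agrees with $f$ on $X$ and takes the value
\[
y:=\interplinear(x)=\frac{b f_1+a f_2}{a+b}
\]
at $x$.

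\textbf{The new vertex $x$.} Its only neighbors in $\graphx$ are $x_1$ and $x_2$, so
\[
\lap\interplinear(x)=\frac{y-f_1}{a}+\frac{y-f_2}{b}.
\]
Substituting $y$ gives $y-f_1=\frac{a(f_2-f_1)}{a+b}$ and $y-f_2=\frac{b(f_1-f_2)}{a+b}$. The two terms are therefore $\frac{f_2-f_1}{a+b}$ and $\frac{f_1-f_2}{a+b}$, which cancel. Equivalently, this is the first-order condition of the minimization in the proof of \Cref{prop:WM_linear_interpolant}.

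\textbf{The endpoints $x_1$ and $x_2$.} Consider $x_1$. Every neighbor other than $x_2$ (in $\graph$) or $x$ (in $\graphx$) contributes the same term $(f(x_1)-f(v))\weight{x_1}{v}$ in both graphs, since the function values and weights are unchanged. It therefore suffices to compare the contribution $(f_1-f_2)/(a+b)$ from $x_2$ in $\graph$ with the contribution $(f_1-y)/a$ from $x$ in $\graphx$. From the computation above, $(f_1-y)/a=(f_1-f_2)/(a+b)$, so the two agree. The symmetric computation handles $x_2$.

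\textbf{All other vertices.} For $i\ge 3$ the neighborhood, the weights and the function values are the same in $\graph$ and $\graphx$. Hence $\lap\interplinear(x_i)=\lap f(x_i)$ trivially.

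The only real point is the endpoint step. There the shortest-path hypothesis $a+b=\dist(x_1,x_2)$ is what makes the flux through $x$ equal the flux through the deleted edge. Without it the edge weight $1/\dist(x_1,x_2)$ would not match $1/(a+b)$. I would state this explicitly and otherwise leave the computation short.
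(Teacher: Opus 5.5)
Your proof is correct and follows the paper's argument essentially step for step. You use the same three-case split (the new vertex $x$, the endpoints $x_1,x_2$, and the untouched vertices), and the same key identity $(f_1-y)/a=(f_1-f_2)/(a+b)$ together with $a+b=\dist(x_1,x_2)$ settles the endpoints. Your added observation is accurate and slightly sharpens the paper's presentation, which invokes the shortest-path relation from the outset: the cancellation at $x$ holds without the geodesic hypothesis, which is needed only at $x_1$ and $x_2$.
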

\begin{proof}
Since $x$ is on a shortest path, we have
$\dist_{\graph}(x_1,x_2)=\dist_{\graphx}(x_1,x)+\dist_{\graphx}(x,x_2)$.
Using this and the definition of the linear interpolant, observe that
\begin{equation}\label{eq:xedgediff}
  \frac{\interplinear(x_1)-\interplinear(x)}{\dist_{\graphx}(x_1,x)}
  =
  \frac{f(x_1) - f(x_2)}{\dist_{\graph}(x_1,x_2)}, \quad\mbox{and}\quad
  \frac{\interplinear(x_2)-\interplinear(x)}{\dist_{\graphx}(x_1,x)} = \frac{f(x_2)-f(x_1)}{\dist_{\graph}(x_1,x_2)}.
\end{equation}
Thus, considering the Laplacian of the interpolant at $x$, we get
\begin{equation}\nonumber
\lap \interplinear(x)
=
\frac{\interplinear(x)-\interplinear(x_1)}{\dist_{\graphx}(x_1,x)}
+
\frac{\interplinear(x)-\interplinear(x_2)}{\dist_{\graphx}(x,x_2)}
=
\frac{f(x_2) -f(x_1)}{\dist_{\graph}(x_1,x_2)}
+
\frac{f(x_1)-f(x_2)}{\dist_{\graph}(x_1,x_2)}
=
0,
\end{equation}
demonstrating that the interpolant is harmonic at $x$.

Using \eqref{eq:xedgediff} and the expanded form of the Laplacian in \eqref{eq:lapf_expanded}, we get
\begin{align*}
\begin{split}
\lap \interplinear(x_1)
&=
\frac{\interplinear(x_1)-\interplinear(x)}{\dist_{\graphx}(x_1,x)}
+
\sum_{x_1\sim x_i, x_i\neq x_2}
\frac{\interplinear(x_1)-\interplinear(x_i)}{\dist_{\graphx}(x_1,x_i)} \\
&=
\frac{f(x_1)-f(x_2)}{\dist_{\graph}(x_1,x_2)}
+
\sum_{x_1\sim x_i, x_i\neq x_2}
\frac{f(x_1)-f(x_i)}{\dist_{\graph}(x_1,x_i)} = \lap f(x_1)
\end{split}
\end{align*}
Similarly, $\lap \interplinear(x_2)=\lap f(x_2)$.
Note that $\lap  \interplinear(x_i) = \lap f(x_i)$ for the other values of $i$ because they are not adjacent to $x$.
\end{proof}

This brings us to the following energy estimate for the Laplacian interpolant $\interp{\lap}$.
\begin{theorem}\label{thm:energybound}
If $x$ is chosen on a shortest path in $\metspace$ from $x_1$ to $x_2$, then the interpolant $\interp{\lap}$ to the function $f$ satisfies
\begin{equation}\nonumber
\norm{\lap \interp{\lap}}_2
\leq
\norm{\lap f}_2
\end{equation}
\end{theorem}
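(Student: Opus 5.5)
Since $\interp{\lap}$ is defined pointwise as a minimizer, I would compare it against the linear interpolant. On $\graphx$, the interpolant $\interp{\lap}$ extends $f$ from $X$ to $X\cup\{x\}$: it agrees with $f$ at every $x_i$ and takes the value $y^\ast$ at $x$. By \eqref{eq:lst_squares} with $\derivative=\lap$ and the $\ell_2$ norm, this value satisfies
\[
\norm{\lap_{\uverts}y^\ast+\lap_{\kverts}f_\kverts}_2^2=\min_{y\in\mathbb{R}}\norm{\lap_{\uverts}y+\lap_{\kverts}f_\kverts}_2^2 .
\]
The vector $\lap_{\uverts}y+\lap_{\kverts}f_\kverts$ is exactly the Laplacian on $\graphx$ of the function equal to $f$ on $X$ and $y$ at $x$. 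So $\norm{\lap\interp{\lap}}_2$, computed on $\graphx$, is the smallest $\ell_2(\verticesx)$ norm of the graph Laplacian over all extensions of $f$ to $\graphx$.

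The linear interpolant $\interplinear$ built from $f(x_1),f(x_2)$ is one such extension. It equals $f(x_1)$ and $f(x_2)$ at the endpoints, because the formula reduces correctly when $x=x_1$ or $x=x_2$. At the other $x_i$ we simply set it equal to $f(x_i)$, which is how Lemma~\ref{lem:splitEdge_equalLaplacian} treats it. Taking $y=\interplinear(x)$ in the minimization therefore gives
\[
\norm{\lap\interp{\lap}}_2\le\norm{\lap\interplinear}_2 .
\]

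Next I would evaluate the right-hand side using Lemma~\ref{lem:splitEdge_equalLaplacian}, whose hypothesis (that $x$ lies on a shortest path from $x_1$ to $x_2$) is exactly the hypothesis of the theorem. The lemma gives $\lap\interplinear(x_i)=\lap f(x_i)$ for every $i$ and $\lap\interplinear(x)=0$. Hence
\[
\norm{\lap\interplinear}_{\ell_2(\verticesx)}^2=\sum_{i=1}^n\abs{\lap f(x_i)}^2+0=\norm{\lap f}_{\ell_2(\vertices)}^2 ,
\]
and chaining the two displays proves the claim.

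The main obstacle is bookkeeping rather than analysis. The statement uses one symbol $\lap$ on two graphs, so I must check that the objective in \eqref{eq:lst_squares} is the full Laplacian energy over all vertices of $\graphx$, and not only the entries in the neighborhood of $x$ or on the local subgraph $\subgraphx$. If the paper intends the local subgraph $\subgraphx$, the same argument still works. The vertices outside $\subgraphx$ contribute entries to $\lap\interp{\lap}$ that do not depend on $y$ and that coincide with the corresponding entries of $\lap f$, because those vertices are not adjacent to $x$. So those terms appear identically on both sides, and the comparison reduces to the local entries, where Lemma~\ref{lem:splitEdge_equalLaplacian} applies unchanged.
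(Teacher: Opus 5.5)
Your proposal is correct and follows the paper's proof. Minimality of $\interp{\lap}(x)$ in \eqref{eq:lst_squares} gives $\norm{\lap\interp{\lap}}_2\le\norm{\lap\interplinear}_2$, and Lemma~\ref{lem:splitEdge_equalLaplacian} identifies the right-hand side with $\norm{\lap f}_2$. Your closing aside about $\subgraphx$ holds only if $\subgraphx$ contains every edge incident to $x_1$ and $x_2$, which is the paper's neighbors-of-neighbors convention (otherwise the rows at $x_1,x_2$ change by $y$-independent constants that do shift the minimizer). In any case the theorem as stated concerns $\graphx$, so the aside is not needed.
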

\begin{proof}
By \eqref{eq:lst_squares}, $\interp{\lap}$ is the interpolant with minimal energy.
Since
$\interplinear$ is an interpolant, we have
\begin{equation}\nonumber
\norm{\lap \interp{\lap} }_2
\leq
\norm{\lap \interplinear}_2,
\end{equation}
and \Cref{lem:splitEdge_equalLaplacian} implies
\begin{equation}\nonumber
\norm{\lap \interplinear}_2
=
\norm{\lap f}_2.
\end{equation}
\end{proof}
This result says that the Laplacian interpolant is at least as smooth on $\graphx$ as the known data on $\graph$.
In general, $\interp{\lap}$ will not be a linear interpolant, so the semi-norm of the Laplacian interpolant can
be strictly smaller than the semi-norm of the original data.
\begin{corollary}\label{cor:harmonic_preservation}
If $f$ is harmonic, then $\interp{\lap}$ is harmonic along geodesics connecting edge endpoints. Moreover, if $f$ is harmonic at the end vertices $x_1$ and $x_2$, then $\interp{\lap}$ is harmonic at $x_1,x_2, x$.
\end{corollary}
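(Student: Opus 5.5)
The plan is to squeeze the energy of $\interp{\lap}$ between zero and the energy of $f$ using \Cref{thm:energybound}, and then use \Cref{lem:splitEdge_equalLaplacian} for the local statement. Throughout, the insertion is the one already in force: $x$ lies on a geodesic from $x_1$ to $x_2$, the edge $(x_1,x_2)$ is deleted, and the edges $(x_1,x)$ and $(x,x_2)$ are added to form $\graphx$.

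For the first claim, suppose $f$ is harmonic on $\graph$, so $\lap f = 0$. \Cref{thm:energybound} gives $\norm{\lap \interp{\lap}}_2 \leq \norm{\lap f}_2 = 0$, hence $\lap \interp{\lap} = 0$ at every vertex of $\graphx$. In particular $\interp{\lap}$ is harmonic at $x$. Since $x$ was an arbitrary point on a geodesic joining the endpoints of an edge, this is the statement that $\interp{\lap}$ is harmonic along such geodesics. No additional work is needed here.

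For the second claim we only assume $\lap f(x_1)=\lap f(x_2)=0$, so the global energy bound is no longer enough. Instead I would use the minimization \eqref{eq:lst_squares} directly. The value $y$ at $x$ enters $\lap g$ for $g$ on $\graphx$ only at the vertices $x$, $x_1$, $x_2$, because these are the only vertices whose neighborhoods contain $x$. The energy therefore splits as
\[
\norm{\lap g}_2^2 = \sum_{i\ge 3}\abs{\lap f(x_i)}^2 + \abs{\lap g(x)}^2+\abs{\lap g(x_1)}^2+\abs{\lap g(x_2)}^2 .
\]
The first sum is fixed, since the $x_i$ with $i\ge3$ are not adjacent to $x$ and so $\lap g(x_i)=\lap f(x_i)$. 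Hence $\interp{\lap}(x)$ minimizes the last three terms over $y$. Take $y=\interplinear(x)$. By \Cref{lem:splitEdge_equalLaplacian}, this choice makes all three terms vanish: $\lap\interplinear(x)=0$, and $\lap\interplinear(x_j)=\lap f(x_j)=0$ for $j=1,2$. So the minimum is zero and is attained by $\interplinear(x)$. Each of the three terms is a nonconstant affine function of $y$ (the coefficients of $y$ are the edge weights, which are nonzero), so the minimizer is unique. Therefore $\interp{\lap}(x)=\interplinear(x)$, and all three Laplacian values vanish, which gives harmonicity at $x_1$, $x_2$ and $x$.

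The step that needs the most care is the localization, namely checking that only the three terms depend on $y$. One must also confirm that the edge deletion does not change $\lap f(x_j)$ compared with $\graph$. Both facts are already established in the proof of \Cref{lem:splitEdge_equalLaplacian} through \eqref{eq:xedgediff}, so the argument should go through without new computation.
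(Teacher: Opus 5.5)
Your proposal is correct and is essentially the paper's argument: the paper also observes that the energy at vertices not adjacent to $x$ is unchanged, and then uses the comparison with $\interplinear$ (packaged as \Cref{thm:energybound}) to force $\lap\interp{\lap}$ to vanish at $x_1$, $x_2$ and $x$. Your uniqueness remark, which shows $\interp{\lap}(x)=\interplinear(x)$ in this situation, is a small bonus that the harmonicity conclusion does not need.
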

\begin{proof}
Any energy of $\lap f$ is fully contained at the vertices other than $x_1,x_2$, and note that these values will be the same for $\lap \interp{\lap}$.
Since $\norm{\lap \interp{\lap}}_2 \leq \norm{\lap f}_2 $
we must have $\lap \interp{\lap}$ being zero at $x_1,x_2, x$.
\end{proof}

\subsection{Weights and distances}\label{subsec:weightanddistances}

In \Cref{prop:WM_linear_interpolant} we saw that $\interp{\fodlapfactor}$ is equivalent to the linear interpolant. A key underlying assumption was the relationship between weights and distances, specifically weights on edges are defined as the inverse of the metric space distance between the vertices defining the edge. Here, we consider the converse question:
\emph{If the interpolant for the first-order derivative $\fodlapfactor$ is the linear interpolant, what is the necessary relationship between weights (defining $\interp{\fodlapfactor}$) and distances (defining $\interplinear$)?}
We then ask an analogous question for the standard Laplacian:
\emph{For what relationship between weights and distances is $\interp{\lap}$ harmonic whenever $f$ is?}

To make our argument, we consider a first-order derivative $\fodlapfactor = W^{1/2} M$ with general weight matrix $W$ and the condition that $\interp{\fodlapfactor}$ gives the linear interpolant. We require this equivalence to hold on all valid metric spaces, short path graphs $\subgraphx$,  and points $x$. Our first result identifies a general constraint on the relationship between the weights and the metric distances.

\begin{lemma}\label{lem:weight_distance_relate_Dr}
If $\interp{\fodlapfactor}$ is always the linear interpolant on short path graphs $\subgraphx$, then
\begin{align*}
\frac{\dist(x,x_2)}{\dist(x_1,x) +\dist(x,x_2)}
&=
\frac{\weight{x_1}{x}}{\weight{x_1}{x}+ \weight{x}{x_2}}\\
\frac{\dist(x,x_1)}{\dist(x_1,x) +\dist(x,x_2)}
&=
\frac{\weight{x_2}{x}}{\weight{x_1}{x}+ \weight{x}{x_2}}
\end{align*}
\end{lemma}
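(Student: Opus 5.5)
I will compute $\interp{\fodlapfactor}$ on the short path graph with general edge weights, exactly as in the proof of \Cref{prop:WM_linear_interpolant}, and then equate it with $\interplinear$ for arbitrary data.

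With a general diagonal weight matrix $W$, the energy in \eqref{eq:lst_squares} is
\[
E(y)=\weight{x_1}{x}\,(y-f_1)^2+\weight{x}{x_2}\,(f_2-y)^2 .
\]
This is a strictly convex quadratic in $y$, since the weights are positive. Setting $E'(y)=0$ gives the unique minimizer
\[
\interp{\fodlapfactor}(x)=\frac{\weight{x_1}{x}f_1+\weight{x}{x_2}f_2}{\weight{x_1}{x}+\weight{x}{x_2}}.
\]
Equivalently, one can read this off \eqref{eq:rational_function}. The relevant column of $\fodlapfactor$ is $(\weight{x_1}{x}^{1/2},-\weight{x}{x_2}^{1/2})^\top$ up to signs, so $\derivative_U^\top\derivative_U=\weight{x_1}{x}+\weight{x}{x_2}$.

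Next I use the hypothesis. It says this expression equals
\[
\interplinear(x)=\frac{f_1\dist(x,x_2)+f_2\dist(x_1,x)}{\dist(x_1,x)+\dist(x,x_2)}
\]
for all data $f_1,f_2$. Both sides are linear functionals of $(f_1,f_2)$. Taking $(f_1,f_2)=(1,0)$ identifies the coefficients of $f_1$:
\[
\frac{\dist(x,x_2)}{\dist(x_1,x)+\dist(x,x_2)}=\frac{\weight{x_1}{x}}{\weight{x_1}{x}+\weight{x}{x_2}},
\]
which is the first claimed identity. Taking $(f_1,f_2)=(0,1)$ gives the second identity in the same way.

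The only delicate point is that the hypothesis has to apply to arbitrary data values. The equivalence is assumed to hold for all metric spaces, short path graphs, and points, and implicitly for all data $f_1,f_2$, which justifies choosing these two test vectors. Symmetry of distances and weights, $\dist(x,x_1)=\dist(x_1,x)$ and $\weight{x_2}{x}=\weight{x}{x_2}$, matches the notation in the statement.
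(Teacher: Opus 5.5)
Your proposal is correct and follows the paper's proof essentially step for step: you minimize the weighted quadratic energy to get $\frac{\weight{x_1}{x}f_1+\weight{x}{x_2}f_2}{\weight{x_1}{x}+\weight{x}{x_2}}$ and then match the coefficients of $f_1$ and $f_2$ against those of $\interplinear$. Your explicit test data $(1,0)$ and $(0,1)$, together with the remark that the hypothesis must hold for arbitrary data, just make precise the paper's phrase ``comparing the coefficients.''
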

\begin{proof}
The argument follows the proof of \Cref{prop:WM_linear_interpolant}.
The interpolant $\interp{\fodlapfactor}$ is formed by solving the least-squares problem, which in this case involves minimizing the expression
\begin{equation}\nonumber
\abs{(y-f_1)\weight{x_1}{x}^{1/2}}^2
+
\abs{(f_2-y)\weight{x}{x_2}^{1/2}}^2
=
\weight{x_1}{x} (y^2-2yf_1+f_1^2)
+
\weight{x}{x_2} (y^2-2yf_2+f_2^2)
\end{equation}
over $y\in\mathbb{R}$. The derivative is
\begin{equation}\nonumber
\weight{x_1}{x} (2y-2f_1)
+
\weight{x}{x_2} (2y-2f_2)
=
2y(\weight{x_1}{x}+ \weight{x}{x_2} )
-
2( \weight{x_1}{x} f_1 + \weight{x}{x_2} f_2),
\end{equation}
which is 0 for
\begin{equation}\nonumber
y
=
\frac{\weight{x_1}{x} f_1 + \weight{x}{x_2} f_2}{\weight{x_1}{x}+ \weight{x}{x_2} }.
\end{equation}
Comparing the coefficients of $f_j$ with the coefficients in $\interplinear$, we get the result.
\end{proof}

\begin{theorem}\label{thm:weights_dist_linearrep}
Assume the hypotheses of \Cref{lem:weight_distance_relate_Dr}.
If weights are a function of distance, i.e.
$\weight{x}{y}=\varphi(\dist(x,y))$ for some
$\varphi:(0,\infty)\to(0,\infty)$, then
\begin{equation}\nonumber
\weight{x}{y}
=
C\dist(x,y)^{-1}
\end{equation}
for some constant $C> 0$.
\end{theorem}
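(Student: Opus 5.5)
The idea is to turn the two identities of \Cref{lem:weight_distance_relate_Dr} into a functional equation for $\varphi$. We then exploit the fact that the hypothesis must hold for \emph{all} valid metric spaces and points $x$, which lets us realize arbitrary pairs of distances.

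First fix a short path graph $\subgraphx$ with $a:=\dist(x_1,x)>0$ and $b:=\dist(x,x_2)>0$. Substitute $\weight{x_1}{x}=\varphi(a)$ and $\weight{x}{x_2}=\varphi(b)$ into the first identity of \Cref{lem:weight_distance_relate_Dr}. This gives
\begin{equation}\nonumber
\frac{b}{a+b}=\frac{\varphi(a)}{\varphi(a)+\varphi(b)}.
\end{equation}
Cross-multiplying yields $b\varphi(a)+b\varphi(b)=a\varphi(a)+b\varphi(a)$, that is,
\begin{equation}\nonumber
a\varphi(a)=b\varphi(b).
\end{equation}
The second identity gives the same relation, so it adds nothing new.

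Next I need this relation for every pair $a,b\in(0,\infty)$, not just the pairs that happen to occur in some particular space. This is the only step needing care: we must make sure that arbitrary pairs of distances are admissible. Since the hypothesis is assumed on all valid metric spaces, take $\metspace=\mathbb{R}$ with the usual metric, which is path-connected. Place $x_1=0$, $x=a$, $x_2=a+b$. Then $x$ lies on the geodesic from $x_1$ to $x_2$, the short path graph is valid, and the distances are exactly $a$ and $b$. Hence $a\varphi(a)=b\varphi(b)$ holds for all $a,b>0$.

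Finally, fix $b=1$ and set $C:=\varphi(1)$. Then $t\varphi(t)=C$ for every $t>0$, so $\varphi(t)=C/t$. Moreover $C>0$ because $\varphi$ takes values in $(0,\infty)$. Therefore $\weight{x}{y}=\varphi(\dist(x,y))=C\dist(x,y)^{-1}$, as claimed.
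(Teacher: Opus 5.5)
Your proof is correct and follows the paper's argument essentially step for step. You clear denominators in the first identity of \Cref{lem:weight_distance_relate_Dr} to get $a\varphi(a)=b\varphi(b)$, then realize arbitrary distance pairs $a,b>0$ (your explicit collinear placement in $\mathbb{R}$ is exactly the paper's ``construct a graph in Euclidean space'' step) to conclude that $t\varphi(t)$ is a positive constant.
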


\begin{proof}
Rearranging the first equation from \Cref{lem:weight_distance_relate_Dr},
\begin{align*}
\begin{split}
\dist(x,x_2)
(\weight{x_1}{x}+ \weight{x}{x_2})
&=
\weight{x_1}{x}
\parenth{\dist(x_1,x) + \dist(x,x_2) }\\
\dist(x,x_2)\weight{x}{x_2}
&=
\weight{x_1}{x}\dist(x_1,x).
\end{split}
\end{align*}
Using the required form for the weights,
\begin{equation}\nonumber
\dist(x,x_2)\varphi(\dist(x,x_2))
=
\varphi(\dist(x_1,x))\dist(x_1,x).
\end{equation}
Since we require this relationship to be valid for all possible graphs and metric spaces,
given two arbitrary numbers $t,T>0$, we can construct a graph in Euclidean space with
$\dist(x_1,x) = t$ and $\dist(x,x_2)=T$, leading to
\begin{equation}\nonumber
T\varphi(T)
=
\varphi(t)t.
\end{equation}
Therefore, $\varphi(t)t$ must be constant, a positive constant $C$.
\end{proof}

There is another interesting relationship that can be derived by assuming a harmonic target function implies a harmonic interpolant.

\begin{lemma}\label{lem:weights_inverse_sum}
   Assume that when
   $f$ on $\graph$ is harmonic at endpoints $x_1,x_2$ of an edge that we then have $\interp{\lap}$ on $\graphx$ being harmonic at $x_1,x,x_2$. Then we get the identity
\begin{equation}\label{eq:weights_inverse_sum}
\frac{1}{\weight{x_1}{x_2}}
=
\frac{1}{\weight{x_1}{x}}
+
\frac{1}{\weight{x_2}{x}}.
\end{equation}
\end{lemma}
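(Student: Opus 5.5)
We write $w_1=\weight{x_1}{x}$, $w_2=\weight{x_2}{x}$, $w_{12}=\weight{x_1}{x_2}$, and $f_i=f(x_i)$. The plan is to write out the harmonicity conditions at $x_1$, $x$ and $x_2$ on $\graphx$ and compare them with the harmonicity conditions at $x_1,x_2$ on $\graph$. Recall that $\graphx$ is obtained from $\graph$ by deleting the edge $(x_1,x_2)$ and adding the edges $(x_1,x)$ and $(x,x_2)$. All other edges at $x_1$ and $x_2$ are unchanged, and $\interp{\lap}$ agrees with $f$ at every known vertex. Let $y=\interp{\lap}(x)$.

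First, harmonicity at $x$ on $\graphx$, by \eqref{eq:lapf_expanded}, reads $w_1(y-f_1)+w_2(y-f_2)=0$. This gives
\[
y=\frac{w_1f_1+w_2f_2}{w_1+w_2}.
\]
Next, harmonicity of $f$ at $x_1$ on $\graph$ says that
\[
w_{12}(f_1-f_2)+S_1=0,\qquad S_1:=\sum_{x_1\sim x_i,\ x_i\neq x_2}\weight{x_1}{x_i}(f_1-f_i).
\]
The neighbours $x_i\ne x_2$ of $x_1$ are the same in $\graph$ and $\graphx$, so harmonicity of $\interp{\lap}$ at $x_1$ on $\graphx$ says that $w_1(f_1-y)+S_1=0$. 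Subtracting the two conditions eliminates $S_1$ and leaves
\[
w_{12}(f_1-f_2)=w_1(f_1-y).
\]
Substituting the expression for $y$ gives $f_1-y=\frac{w_2(f_1-f_2)}{w_1+w_2}$, and therefore
\[
w_{12}(f_1-f_2)=\frac{w_1w_2}{w_1+w_2}(f_1-f_2).
\]
The same computation at $x_2$ yields an identical equation, so it adds no new information.

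The main obstacle is cancelling the factor $(f_1-f_2)$. The hypothesis has to be used for a function that is harmonic at $x_1$ and $x_2$ with $f_1\neq f_2$. I would read the assumption, in the spirit of \Cref{thm:weights_dist_linearrep}, as holding for all admissible $f$ and graphs. It then suffices to exhibit one such $f$, and I expect that to be easy. For instance, take the local structure to be a path $x_0\sim x_1\sim x_2\sim x_3$ and prescribe $f$ linearly in arclength along the path, with weights equal to inverse distances, so that $f$ is harmonic at both interior vertices. Alternatively, on a general graph, fix $f_1\ne f_2$ and solve the Dirichlet problem with $x_1,x_2$ interior and the remaining vertices as boundary data chosen appropriately; this needs some care to ensure $x_1,x_2$ are not both boundary vertices.

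Once $f_1\neq f_2$, dividing out gives $w_{12}=\frac{w_1w_2}{w_1+w_2}$. Taking reciprocals yields $\frac1{w_{12}}=\frac1{w_1}+\frac1{w_2}$, which is \eqref{eq:weights_inverse_sum}. As a sanity check, with $\weight{u}{v}=\dist(u,v)^{-1}$ this identity is exactly additivity of distance along the geodesic, which is consistent with \Cref{cor:harmonic_preservation}.
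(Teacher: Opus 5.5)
Your proof is correct and is essentially the paper's: both compare the harmonicity conditions for $\interp{\lap}$ on $\graphx$ with those for $f$ on $\graph$, cancel the unchanged edge terms, and divide by $f(x_1)-f(x_2)$, which the paper simply assumes to be nonzero. The paper combines the conditions at $x_1$ and $x_2$, whereas you combine those at $x$ and $x_1$; the condition at $x$ is in fact the sum of the other two, so either pairing works. One correction to your nondegeneracy example: linear-in-arclength data is harmonic only for inverse-distance weights, so for arbitrary weights on the path $x_0\sim x_1\sim x_2\sim x_3$ you should instead solve the two harmonicity equations directly for $f(x_0)$ and $f(x_3)$.
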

\begin{proof}
The key observation is the structure of the vertex-based form  of the Laplacian \eqref{eq:lapf_expanded}.
If $\lap f(x_1)= \lap f(x_2)=0$, to keep the interpolant harmonic at $x_1$, we must have
\begin{equation}\nonumber
(f(x_1)-f(x_2))\weight{x_1}{x_2}
=
(\interp{\lap}(x_1)-\interp{\lap}(x))\weight{x_1}{x}
\end{equation}
since all other terms defining $\lap\interp{\lap}(x_1)$ match the corresponding terms defining $\lap f$ at $x_1$. Similarly,
\begin{equation}\nonumber
(f(x_2)-f(x_1))\weight{x_1}{x_2}
=
(\interp{\lap}(x_2)-\interp{\lap}(x))\weight{x_2}{x}.
\end{equation}
Finally, to be harmonic at $x$,
\begin{equation}\nonumber
(\interp{\lap}(x)-\interp{\lap}(x_1))\weight{x}{x_1}
=
-
(\interp{\lap}(x)-\interp{\lap}(x_2))\weight{x}{x_2}.
\end{equation}
Using the above equations, we get
\begin{align*}
\frac{\interp{\lap}(x_1)-\interp{\lap}(x)}{\weight{x_1}{x_2}}
-
\frac{\interp{\lap}(x_2)-\interp{\lap}(x)}{\weight{x_1}{x_2}}
&=
\frac{f(x_1)-f(x_2)}{\weight{x_1}{x}}
-
\frac{f(x_2)-f(x_1)}{\weight{x_2}{x}},\\
\end{align*}
which results in
\begin{align*}
\parenth{\interp{\lap}(x_1)-\interp{\lap}(x_2)}\frac{1}{\weight{x_1}{x_2}}
&=
\parenth{f(x_1)-f(x_2)}
\parenth{
\frac{1}{\weight{x_1}{x}}
+
\frac{1}{\weight{x_2}{x}}
}.
\end{align*}
In the situation where $f(x_1)\neq f(x_2)$, we obtain the result.
\end{proof}

\begin{remark}
The condition \eqref{eq:weights_inverse_sum} is precisely the \emph{series resistance identity} from electrical network theory, expressed in terms of the edge weights.
Interpreting the reciprocals of the edge weights as resistances, this identity states that subdividing an edge by inserting an intermediate vertex preserves the effective resistance between $x_1$ and $x_2$.
\end{remark}

Now we use \eqref{eq:weights_inverse_sum} to show, as in  \Cref{thm:weights_dist_linearrep}, that under these assumptions the edge weights are necessarily inversely proportional to the metric distance.

\begin{theorem}
Assume the hypotheses of \Cref{lem:weights_inverse_sum} hold. If the weights are a continuous function of distance, i.e.
$\weight{x}{y}=\varphi(\dist(x,y))$ for some
$\varphi:(0,\infty)\to(0,\infty)$, then
\begin{equation}\nonumber
\weight{x}{y}
=
C\dist(x,y)^{-1}
\end{equation}
for some constant $C> 0$. For these weights, harmonic reproduction at an inserted point occurs whenever the inserted point lies on a geodesic joining the endpoints of the edge.
\end{theorem}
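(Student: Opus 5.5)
The plan is to turn the series resistance identity \eqref{eq:weights_inverse_sum} of \Cref{lem:weights_inverse_sum} into a functional equation for $\psi(t):=1/\varphi(t)$. Take $x$ on a geodesic from $x_1$ to $x_2$, so that $\dist(x_1,x_2)=\dist(x_1,x)+\dist(x,x_2)$. Substituting $\weight{x}{y}=\varphi(\dist(x,y))$ into \eqref{eq:weights_inverse_sum} gives
\begin{equation}\nonumber
\psi(s+t)=\psi(s)+\psi(t),
\end{equation}
where $s=\dist(x_1,x)$ and $t=\dist(x,x_2)$. As in the proof of \Cref{thm:weights_dist_linearrep}, the identity is required on every admissible metric space and graph. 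So for arbitrary $s,t>0$ I will take $\metspace=\mathbb{R}$ with $x_1=0$, $x=s$, $x_2=s+t$, and choose $f$ harmonic at $x_1,x_2$ with $f(x_1)\neq f(x_2)$. This shows the Cauchy equation holds for all $s,t>0$. The condition $f(x_1)\ne f(x_2)$ is needed because the proof of \Cref{lem:weights_inverse_sum} divides by $f(x_1)-f(x_2)$. One such $f$ is a path $x_0,x_1,x_2,x_3$ on the line carrying a linear function: it is harmonic at the interior vertices and nonconstant.

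Next I solve the Cauchy equation on $(0,\infty)$ using continuity; this step is the reason the hypothesis says ``continuous.'' Since $\varphi$ is positive and continuous, $\psi$ is positive and continuous. Induction gives $\psi(nt)=n\psi(t)$, hence $\psi(q)=q\,\psi(1)$ for all positive rationals $q$. Continuity then extends this to $\psi(t)=t\,\psi(1)$ for all $t>0$. Setting $C=1/\psi(1)>0$ gives $\varphi(t)=C/t$, which is $\weight{x}{y}=C\dist(x,y)^{-1}$.

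For the final sentence I reverse the argument with $\weight{x}{y}=C\dist(x,y)^{-1}$ and $x$ on a geodesic. The scaling by $C$ multiplies both $\lap$ and the interpolation energy by a constant, so neither the minimizer nor harmonicity is affected. The argument of \Cref{lem:splitEdge_equalLaplacian} then goes through verbatim and gives $\lap\interplinear=\lap f$ at the $x_i$ and $\lap\interplinear(x)=0$. By \Cref{cor:harmonic_preservation}, if $f$ is harmonic at $x_1$ and $x_2$, the minimality argument of \Cref{thm:energybound} forces $\lap\interp{\lap}$ to vanish at $x_1$, $x$, and $x_2$. That is the claimed harmonic reproduction.

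I expect the main obstacle to be justifying that the hypotheses of \Cref{lem:weights_inverse_sum} are exercised for every pair $(s,t)$. This requires a nondegenerate harmonic $f$ in a legitimate configuration, which the explicit construction on $\mathbb{R}$ above should handle. The Cauchy-equation step is routine once continuity is in hand.
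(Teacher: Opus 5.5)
Your proposal is correct. It differs from the paper's proof in two places.

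\textbf{The functional equation.} The paper takes equally spaced points $x_0,\dots,x_n$ on a geodesic with $\dist(x_0,x_n)=m\in\mathbb{N}$. Applying \eqref{eq:weights_inverse_sum} repeatedly gives $\varphi(m/n)=n\varphi(m)$, so $t\varphi(t)$ is constant on the positive rationals, and continuity finishes the argument. You instead realize every split $s+t$ with real $s,t>0$ by an explicit configuration on $\mathbb{R}$. This yields the full Cauchy equation for $\psi=1/\varphi$, and it also makes explicit the nondegenerate harmonic data that the paper's phrase ``repeatedly applying'' leaves implicit.

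Your route has a bonus you did not notice. Once additivity holds for all real $s,t>0$, positivity of $\psi$ makes it strictly increasing, since $\psi(s+t)=\psi(s)+\psi(t)>\psi(s)$. A monotone additive function is linear (squeeze between rationals). So continuity is not actually needed in your argument, contrary to your remark that it is why the hypothesis says ``continuous.'' It is needed in the paper's route, which uses only equal subdivisions of integer lengths.

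\textbf{The final sentence.} The paper substitutes $\varphi(t)=C/t$ back into \eqref{eq:weights_inverse_sum} to get $\dist(x_1,x_2)=\dist(x_1,x)+\dist(x,x_2)$. That identity is only a necessary condition for harmonic reproduction. So the paper's substitution really shows the converse: with these weights, harmonic reproduction with $f(x_1)\neq f(x_2)$ forces $x$ onto a geodesic. Your argument (scaling by $C$, then \Cref{lem:splitEdge_equalLaplacian}, \Cref{thm:energybound} and \Cref{cor:harmonic_preservation}) proves the ``whenever'' direction exactly as stated. The two arguments together give ``precisely when.''

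\textbf{One slip to fix.} While $\varphi$ is still unknown, a linear function on a path $x_0,x_1,x_2,x_3$ is harmonic at $x_1$ only if $\dist(x_0,x_1)\varphi(\dist(x_0,x_1))=\dist(x_1,x_2)\varphi(\dist(x_1,x_2))$. That is essentially the conclusion you are trying to prove, so as written the test function is circular. Either fix works:
\begin{enumerate}[(i)]
\item Make the path equally spaced, e.g.\ $x_0=-(s+t)$ and $x_3=2(s+t)$. Then each interior vertex has two incident edges of equal weight, and a linear $f$ is harmonic there for any $\varphi$.
\item Prescribe $f(x_1)\neq f(x_2)$ and choose $f(x_0)$, $f(x_3)$ to solve the two harmonicity equations at $x_1$ and $x_2$.
\end{enumerate}
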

\begin{proof}
The proof exploits the additive structure of distances along geodesics. Let $m,n$ be positive integers, and let $x_0,x_1,\dots,x_n$ be a sequence of equally spaced points on a geodesic between vertices $x_0,x_n$ that are connected by an edge in $\graph$. If $\dist(x_0,x_n)=m$, then repeatedly applying \eqref{eq:weights_inverse_sum} gives
\begin{align*}
\frac{1}{\varphi(m)}
&=
\sum_{i=1}^n \frac{1}{\varphi(m/n)}.
\end{align*}
Rearranging gives
\begin{equation}\nonumber
\frac{m}{n}
\varphi \parenth{\frac{m}{n} }
=
m \varphi(m).
\end{equation}
Since $m,n$ are arbitrary, we can select values to show that $t\varphi(t)$ is constant on the positive rationals. By continuity, $t\varphi(t)$ is a fixed constant $C>0$ for all $t>0$.

Finally, we determine when these weights are compatible with harmonic reproduction. Combining $\varphi(t) = C/t$ with \eqref{lem:weights_inverse_sum} implies that
\[\dist(x_1,x_2) = \dist(x,x_1) + \dist(x,x_2),\]
which holds precisely when $x$ is on a geodesic between $x_1$ and $x_2$.
\end{proof}

\subsection{Lipschitz Constants on Path Subgraphs}

Recall that on any metric space $(Y,\dist_Y)$ (including a graph) a function $f:Y\rightarrow \mathbb{R}$ is Lipschitz continuous if there is a constant $C>0$ such that
\begin{equation}\nonumber
\abs{f(x)-f(y)}
 \leq
 C
 \dist_Y(x,y)
\end{equation}
for all $x,y$. The smallest constant is called the Lipschitz constant, which we denote by $\lip{f}$.

We begin with two elementary observations concerning Lipschitz constants on graphs and their ambient metric spaces.
\begin{proposition}
For a function $f:\graph\rightarrow \mathbb{R}$,
$\norm{\fodgrad f}_{\infty} =\norm{WM f}_{\infty}$ is the Lipschitz constant $\lip{f}$.
\end{proposition}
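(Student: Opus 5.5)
We prove the two inequalities $\lip{f}\le\norm{WMf}_\infty$ and $\norm{WMf}_\infty\le\lip{f}$ separately, using the graph metric $\dist_{\graph}$ as the metric on $\vertices$. First we write out the entries of $WMf$. For an edge $e=(u,v)$, the row of $M$ indexed by $e$ has entries $\pm1$ in columns $u,v$. Hence
\[
(WMf)_e=\pm\,\weight{u}{v}\,(f(u)-f(v))=\pm\frac{f(u)-f(v)}{\dist_{\graph}(u,v)},
\]
since adjacent vertices have graph distance $\dist_\metspace(u,v)=1/\weight{u}{v}$. Therefore
\[
\norm{WMf}_\infty=\max_{u\sim v}\frac{\abs{f(u)-f(v)}}{\dist_{\graph}(u,v)}.
\]
This is the maximal difference quotient over edges, and the sign convention in $M$ plays no role.

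\textbf{Lower bound.} Each edge difference quotient is one of the quotients $\abs{f(x)-f(y)}/\dist_{\graph}(x,y)$ that $\lip{f}$ must dominate. Taking the maximum over edges gives $\norm{WMf}_\infty\le\lip{f}$.

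\textbf{Upper bound.} This direction is the main step, and it is a telescoping argument. Fix vertices $x\neq y$ and set $L=\norm{WMf}_\infty$. We assume the graph is connected, as is implicit in $\dist_{\graph}$ being a metric. Choose a shortest path $x=v_0\sim v_1\sim\cdots\sim v_k=y$ realizing $\dist_{\graph}(x,y)=\sum_{j}\dist_{\graph}(v_{j-1},v_j)$. This requires that the graph distance between adjacent vertices equal the edge length. The paper defines it that way, but strictly the path metric could be shorter than a single edge. Even then, each step in the sum below still uses the edge length, and the shortest path consists only of edges, so the estimate holds with $\dist_{\graph}$ equal to the path length. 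Applying the triangle inequality along the path,
\[
\abs{f(x)-f(y)}\le\sum_{j=1}^k\abs{f(v_{j-1})-f(v_j)}\le L\sum_{j=1}^k \frac{1}{\weight{v_{j-1}}{v_j}}=L\,\dist_{\graph}(x,y).
\]
Hence $\lip{f}\le L$.

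The only subtle point is to compute $\lip{f}$ with respect to the graph's path metric rather than $\dist_\metspace$. With $\dist_\metspace$ the statement can fail, because $\dist_\metspace(x,y)$ may be strictly less than the path length. We would note this explicitly.

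Combining the two bounds gives $\norm{WMf}_\infty=\lip{f}$. The maximum is attained because the graph is finite. For an edgeless or single-vertex graph both sides are $0$.
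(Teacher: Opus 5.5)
Your proof is correct. The paper gives no proof of this proposition; it lists it as an ``elementary observation.'' Your two-step argument is the natural way to fill that omission. For $\norm{WMf}_\infty\le\lip{f}$, each edge quotient $\abs{f(u)-f(v)}\weight{u}{v}$ is one of the quotients that $\lip{f}$ dominates. For the reverse inequality, telescoping along a shortest path gives $\abs{f(x)-f(y)}\le\norm{WMf}_\infty\dist_{\graph}(x,y)$.

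One correction to your hedge: the graph path metric can never be shorter than a single edge in this setting. Every edge length is an ambient distance. So for adjacent $u,v$, any path $u=v_0\sim\cdots\sim v_k=v$ has length $\sum_j\dist_{\metspace}(v_{j-1},v_j)\ge\dist_{\metspace}(u,v)$ by the triangle inequality in $\metspace$. Thus $\dist_{\graph}(u,v)=1/\weight{u}{v}$ holds automatically, and the paper's definition of graph distance is self-consistent. Your remark that the identity needs the graph metric rather than $\dist_\metspace$ is correct.
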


\begin{proposition}
Suppose a graph is embedded in a metric space, and the distance along each graph edge is no greater than the corresponding distance in the metric space. Then, for any Lipschitz function defined on the embedding space, the Lipschitz constant of its restriction to the graph is bounded by its Lipschitz constant with respect to the ambient space.
\end{proposition}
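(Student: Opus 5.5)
The plan is to compare graph distance with ambient distance and then pass the ambient Lipschitz bound through that comparison. Let $f:\metspace\to\mathbb{R}$ be Lipschitz with constant $\lip{f}$ on $(\metspace,\dist_\metspace)$. Write $g$ for its restriction to the vertex set of the embedded graph $\graph$, viewed as a metric space with the graph distance $\dist_{\graph}$. We must show $\abs{g(u)-g(v)}\le \lip{f}\,\dist_{\graph}(u,v)$ for all vertices $u,v$.

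\textbf{Step 1: graph distance dominates ambient distance.} Read the hypothesis as saying that the ambient distance between adjacent vertices is at most the length assigned to that edge, that is, $\dist_\metspace(u,v)\le \dist_{\graph}(u,v)$ whenever $u\sim v$. In the paper's setting edge lengths are exactly $\dist_\metspace(u,v)$, so this holds with equality. For arbitrary vertices $u,v$, take a shortest path $u=v_0\sim v_1\sim\cdots\sim v_k=v$ realizing $\dist_{\graph}(u,v)=\sum_j \dist_{\graph}(v_{j-1},v_j)$. The triangle inequality in $\metspace$ then gives
\[
\dist_\metspace(u,v)\le\sum_{j=1}^k \dist_\metspace(v_{j-1},v_j)\le \sum_{j=1}^k \dist_{\graph}(v_{j-1},v_j)=\dist_{\graph}(u,v).
\]

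\textbf{Step 2: conclude.} Combining Step 1 with the Lipschitz property of $f$ on $\metspace$,
\[
\abs{g(u)-g(v)}=\abs{f(u)-f(v)}\le \lip{f}\,\dist_\metspace(u,v)\le \lip{f}\,\dist_{\graph}(u,v).
\]
So $\lip{f}$ is an admissible Lipschitz constant for $g$, and the smallest such constant satisfies $\lip{g}\le\lip{f}$.

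Alternatively, by the preceding proposition $\lip{g}=\norm{WMg}_\infty$. It then suffices to check single edges: each entry of $WMg$ is $\abs{f(u)-f(v)}/\dist_{\graph}(u,v)$ for an edge $(u,v)$, which is at most $\lip{f}\,\dist_\metspace(u,v)/\dist_{\graph}(u,v)\le\lip{f}$.

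\textbf{Main obstacle.} There is no real technical difficulty here. The only point needing care is interpreting the hypothesis so that the inequality runs in the correct direction: $\dist_\metspace\le\dist_{\graph}$ on edges. That direction is what makes Step 1 go through, and it propagates to all vertex pairs via shortest paths and the triangle inequality.
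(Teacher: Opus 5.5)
Your argument is correct. The paper states this proposition as an ``elementary observation'' and gives no proof, so there is no different route to compare against. Your two steps are the natural proof: first show $\dist_\metspace(u,v)\le\dist_{\graph}(u,v)$ for all vertex pairs via a shortest path and the triangle inequality in $\metspace$, then compose this with the ambient Lipschitz bound.

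You handle the direction of the hypothesis with ``read the hypothesis as'', but it should be stated explicitly. As literally worded, the statement says the graph edge length is at most the ambient distance, i.e.\ $\dist_{\graph}(u,v)\le\dist_\metspace(u,v)$ for $u\sim v$. That is the opposite of the inequality you use.

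Under the literal reading the claim is false. Take $\metspace=\mathbb{R}$, vertices $0$ and $1$ joined by an edge of assigned length $1/2$, and $f(t)=t$. Then $\lip{f}=1$, while the restriction has graph Lipschitz constant $2$.

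So your reversal is necessary rather than a matter of interpretation, and you should say that the statement must be read (or corrected) this way. In the paper's own setting, edge lengths equal ambient distances, so both inequalities hold with equality on edges and your proof applies verbatim.
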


We now turn to estimating graph Lipschitz constants for the interpolants $\interp{\lap}$, $\interpkNN$, and $\interp{\rwlap}$ on simple path subgraphs. Consider any path $x_1,x_2,x_3,x_4$ in a graph $\graph$. One can construct a new graph $\graph_x$ by removing the edge $(x_2,x_3)$ and inserting edges $(x_2,x)$ and $(x,x_3)$, where $x\in\metspace$. The interpolant's value at $x$ will be determined by spline interpolation on the path subgraph $\subgraphx$ (see \Cref{fig:graph_example_path}, for example). We begin with the standard Laplacian spline.

\begin{theorem}\label{thm:lipschitz_lap_G_Gx}
Let $f$ be a function on $\graph$, $x\in\metspace$, and consider $\graphx$ as described above. The Lipschitz constant for the interpolant $\interp{\lap}$ on $\graphx$ is bounded by a multiple of the Lipschitz constant for $f$ on $\graph$:
\begin{equation}\nonumber
\lip{\interp{\lap}}
\leq
\frac{4}{3}\lip{f}.
\end{equation}
\end{theorem}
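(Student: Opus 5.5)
The plan is to reduce everything to the two new edges $(x_2,x)$ and $(x,x_3)$ and then bound the slopes on them with the explicit minimizer of \eqref{eq:lst_squares}. On $\graphx$, every edge except these two is an edge of $\graph$ with the same endpoints, the same length and unchanged data values. So by the first proposition of this subsection ($\lip{\cdot}=\norm{WM\,\cdot}_\infty$), it suffices to show
\[
\frac{\abs{y-f_2}}{a}\le \tfrac43 L
\quad\text{and}\quad
\frac{\abs{f_3-y}}{b}\le \tfrac43 L ,
\]
where $y=\interp{\lap}(x)$ and $L=\lip{f}$. Here $f_i=f(x_i)$, $a=\dist(x_2,x)$, $b=\dist(x,x_3)$, $d_1=\dist(x_1,x_2)$ and $d_3=\dist(x_3,x_4)$.

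Set $s_1=(f_2-f_1)/d_1$ and $s_3=(f_4-f_3)/d_3$. Both satisfy $\abs{s_1},\abs{s_3}\le L$. Set $m=(f_3-f_2)/(a+b)$. By the triangle inequality $\dist_{\graph}(x_2,x_3)=\dist(x_2,x_3)\le a+b$, so $\abs{m}\le L$ even when $x$ is off a geodesic.

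Using \eqref{eq:lapf_expanded} with weights $1/\dist$, the only entries of $\lap g$ that depend on $y$ are those at $x_2$, $x$ and $x_3$. Introduce the new-edge slopes $u=(y-f_2)/a$ and $v=(f_3-y)/b$. In these variables the three entries are $s_1-u$, $u-v$ and $v-s_3$. The constraint $au+bv=f_3-f_2$ lets me parametrize by one variable $\tau$:
\[
u=m+b\tau,\qquad v=m-a\tau .
\]
The energy is then the quadratic $(s_1-m-b\tau)^2+(a+b)^2\tau^2+(m-s_3-a\tau)^2$, and setting its derivative to zero gives
\[
u = m+\frac{b\,[\,b s_1 - a s_3 + (a-b)m\,]}{2(a^2+ab+b^2)} .
\]
This is consistent with the rational formulas referenced in the appendix.

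The key step is to group this as a linear combination of $s_1$, $s_3$ and $m$ with explicit coefficients, and only then apply the triangle inequality. If one instead bounds $\abs{s_1-m}$ and $\abs{m-s_3}$ by $2L$ separately, the constant is only $2$. The coefficient of $m$ is $(2a^2+3ab+b^2)/Q$ with $Q=2(a^2+ab+b^2)$, which is positive. Hence
\[
\abs{u}\le L\,\frac{(2a^2+3ab+b^2)+b^2+ab}{2(a^2+ab+b^2)}
= L\,\frac{(a+b)^2}{a^2+ab+b^2}.
\]
Since $(a+b)^2/(a^2+ab+b^2)=1+ab/(a^2+ab+b^2)$ and $ab\le (a^2+b^2)/2$, this ratio is at most $4/3$, with equality when $a=b$.

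The bound for $v$ follows by the symmetric computation, exchanging the roles of $(x_1,x_2,a)$ and $(x_4,x_3,b)$. This completes the bound $\lip{\interp{\lap}}\le\frac43\lip{f}$. The main obstacle I anticipate is exactly that grouping of coefficients, which is what lets the worst case over the ratio $a/b$ come out to $4/3$ rather than $2$.
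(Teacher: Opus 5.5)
Your proposal is correct and follows essentially the same route as the paper. Both arguments write the new-edge slope $(\interp{\lap}(x)-f_2)/a$ as $\bigl(b^2 s_1+(2a+b)(a+b)\,m-ab\,s_3\bigr)/\bigl(2(a^2+ab+b^2)\bigr)$, bound each slope by $\lip{f}$ (using $\dist(x_2,x_3)\le a+b$ for $m$), and then maximize $ab/(a^2+ab+b^2)\le 1/3$. The only difference is that you obtain these coefficients from the slope parametrization $u=m+b\tau$, $v=m-a\tau$, whereas the paper rearranges the appendix formula \eqref{eq:pathlapinterp_appendix}.
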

\begin{proof}
Our proof is direct, using a formula for $\interp{\lap}(x)$ derived in \Cref{subsec:formula_path_subgraph_laplacian}. For ease of notation, we denote distances on $\subgraphx$ as
$\dist_1=\dist_{\subgraphx}(x_1,x_2)$,
$\dist_2=\dist_{\subgraphx}(x_2,x)$,
$\dist_3=\dist_{\subgraphx}(x_3,x)$,
$\dist_4=\dist_{\subgraphx}(x_3,x_4)$.
Referring to the formula \eqref{eq:pathlapinterp_appendix} in \Cref{subsec:formula_path_subgraph_laplacian} and collecting the given function values,
the standard Laplacian spline is given by
\begin{equation}\label{eq:stdpathinterp}
   \interp{\lap}(x)
   = \frac{1}{A}\left(
   -\frac{\dist_2 \dist_3^2}{\dist_1}f_1
   +\left(\frac{\dist_2\dist_3^2}{\dist_1} + (\dist_2 + 2\dist_3)\dist_3\right)f_2
   +\left(\frac{\dist_3\dist_2^2}{\dist_4} + (2\dist_2 + \dist_3)\dist_2\right)f_3
   -\frac{\dist_3\dist_2^2}{\dist_4}f_4\right),
\end{equation}
where
\begin{equation}\nonumber
A
=
\dist_2^2 +(\dist_2+\dist_3)^2 +\dist_3^2
=
2\dist_2^2 +2\dist_2\dist_3 +2\dist_3^2.
\end{equation}
Rearranging gives
\begin{equation}\label{eq:stdlap_dform}
   \interp{\lap}(x)
   = \frac{1}{A}\left(
   \frac{\dist_2 \dist_3^2}{\dist_1}(f_2 - f_1)
   +(\dist_2 + 2\dist_3)\dist_3 f_2
   +(2\dist_2 + \dist_3)\dist_2 f_3
   +\frac{\dist_3\dist_2^2}{\dist_4}(f_3 - f_4)\right).
\end{equation}
Thus we get
\begin{align}\begin{split}\nonumber
    \frac{1}{\dist_2}(\interp{\lap}(x) - f_2)
    &= \frac{1}{A}\left(
   \frac{\dist_3^2}{\dist_1}(f_2 - f_1)
   +(\dist_2 + 2\dist_3)\frac{\dist_3}{\dist_2} f_2
   +(2\dist_2 + \dist_3) f_3
   +\frac{\dist_3\dist_2}{\dist_4}(f_3 - f_4)\right)-\frac{1}{A}\frac{A f_2}{\dist_2}\\
    &= \frac{1}{A}\left(
   \frac{\dist_3^2}{\dist_1}(f_2 - f_1)
   +\frac{1}{\dist_2}\left((\dist_2 + 2\dist_3)\dist_3-A\right)f_2
   +(2\dist_2 + \dist_3) f_3
   +\frac{\dist_3\dist_2}{\dist_4}(f_3 - f_4)\right).
\end{split}\end{align}
Note that the coefficient of $f_2$ simplifies to
\begin{align}\begin{split}\nonumber
    \frac{1}{\dist_2}\left((\dist_2 + 2\dist_3)\dist_3-A\right)
    & =
    \frac{1}{\dist_2}\left(\dist_2 \dist_3 +2\dist_3^2 - 2\dist_2^2 - 2\dist_2\dist_3 - 2\dist_3^2\right)=
    \frac{1}{\dist_2}\left(-\dist_2 \dist_3 - 2\dist_2^2\right)= -\dist_3 - 2\dist_2,
\end{split}\end{align}
which yields
\begin{align}\begin{split}\label{eq:stdlappath_est}
    \frac{1}{\dist_2}\left|\interp{\lap}(x) - f_2\right|
    &=
    \frac{1}{A}\left|
   \frac{\dist_3^2}{\dist_1}(f_2 - f_1)
   +(2\dist_2 + \dist_3) (f_3 - f_2)
   +\frac{\dist_3\dist_2}{\dist_4}(f_3 - f_4)\right| \\
   &=
   \frac{1}{A}\left|
   \dist_3^2\frac{f_2 - f_1}{\dist_1}
   +(2\dist_2 + \dist_3)(\dist_2+\dist_3)\frac{f_3 - f_2}{\dist_2+\dist_3}
   +\dist_3\dist_2\frac{f_3 - f_4}{\dist_4}\right|\\
    & \leq
    \lip{f}
    \frac{1}{A}\left(\dist_3^2 +(2\dist_2+\dist_3)(\dist_2+\dist_3) +\dist_2\dist_3\right) \\
    &= \lip{f}
    \frac{1}{A}\left(\dist_3^2 + 2\dist_2^2 +3\dist_2\dist_3 +\dist_3^2 +\dist_2\dist_3\right) = \lip{f}
    \frac{1}{A}\left(2\dist_3^2 + 2\dist_2^2 +2\dist_2\dist_3 + 2\dist_2\dist_3\right) \\
    &= \lip{f}
    \frac{1}{A}\left(A + 2\dist_2\dist_3\right) = \lip{f}
    \left(1 +\frac{2\dist_2\dist_3}{A}\right),
\end{split}\end{align}
where we have used the fact that
\[\frac{|f_3-f_2|}{\dist_2+\dist_3} \leq \frac{|f_3-f_2|}{\dist_{\graph}(x_2,x_3)} \leq \lip{f}.\]
Finally, let
$\dist_2 = \alpha (\dist_2 +\dist_3)$, which implies that $0\leq \alpha \leq 1$ and
$\dist_3 = (1-\alpha)(\dist_2 +\dist_3)$,
and note that
\begin{equation}\nonumber
\frac{2\dist_2 \dist_3}{A}
=
\frac{\dist_2\dist_3}{\dist_2^2 +\dist_3^2+ \dist_2 \dist_3}
=
\frac{\alpha(1-\alpha)}{\alpha^2 +(1-\alpha)^2 +\alpha(1-\alpha)}
=
\frac{\alpha(1-\alpha)}{\alpha^2 +1-\alpha}.
\end{equation}
This final expression is maximized over $[0,1]$ when $\alpha = 1/2$, which gives
\begin{equation}\nonumber
\frac{2\dist_2\dist_3}{A} \leq \frac{1}{3}.
\end{equation}
Applying this to \eqref{eq:stdlappath_est}, it follows that
\begin{equation}\nonumber
\frac{1}{\dist_2}\left|\interp{\lap}(x) - f_2\right|
\leq
\frac{4}{3}\lip{f}.
\end{equation}
Similarly,
\begin{equation}\nonumber
\frac{1}{\dist_3}\left|\interp{\lap}(x) - f_3\right| \leq \frac{4}{3}\lip{f},
\end{equation}
which completes the proof.
\end{proof}

\begin{remark}
Inspection of \eqref{eq:stdlap_dform} shows how the standard Laplacian spline depends on the slope endpoint data through the quantities $(f_2-f_1)/\dist_1$, $f_2$, $f_3$, ($f_3-f_4)/\dist_4$. If these quantities are held fixed, then $\interp{\lap}(x)$ is unchanged.
\end{remark}

\begin{example}\label{example:stdlip}
To see that \Cref{thm:lipschitz_lap_G_Gx} is sharp, consider the following example on $\metspace=\mathbb{R}$. let $x_1=1$, $x_2=2$, $x_3=3$, $x_4=4$, $x=2.5$, and let $f_1=0$, $f_2=1$, $f_3=2$, $f_4=1$. This gives $\lip{f}=1$. Using \eqref{eq:stdpathinterp}, we get $\interp{\lap}(x)=5/3$ and $|f_2 - \interp{\lap}(x)|/\dist_2 = 4/3$. For a plot of the interpolant over the entire interval $(x_2,x_3)$, see \Cref{subfig:stdlipexample}.
\end{example}

Now we investigate similar bounds when using the $k$NN interpolant or the random walk spline. The following results show that both can allow more slope magnification than the standard Laplacian spline.

\begin{theorem}\label{thm:lipschitz_knn_Gx_G}
Let $f$ be a function on $\graph$, $x\in\metspace$, and consider $\graphx$ as described above. The Lipschitz constant for the interpolant $\interpkNN$ on $\graphx$ is bounded by a multiple of the Lipschitz constant for $f$ on $\graph$:
\begin{equation}\nonumber
\lip{\interpkNN}
\leq
3\lip{f}.
\end{equation}
\end{theorem}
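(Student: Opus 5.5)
The plan is to argue directly from the explicit weighted-average form of $\interpkNN$ on the path subgraph $\subgraphx$, in the same spirit as the proof of \Cref{thm:lipschitz_lap_G_Gx}. I will take $\interpkNN(x)$ to be the inverse-distance weighted average of the data at the neighbors $x_1,x_2,x_3,x_4$ of $x$ in $\subgraphx$, using graph distances. With the notation $\dist_1,\dots,\dist_4$ from \Cref{thm:lipschitz_lap_G_Gx}, the distances from $x$ are
\[
\dist_1+\dist_2,\qquad \dist_2,\qquad \dist_3,\qquad \dist_3+\dist_4 .
\]
The weights are therefore
\[
w_1=\frac{1}{\dist_1+\dist_2},\qquad w_2=\frac{1}{\dist_2},\qquad w_3=\frac{1}{\dist_3},\qquad w_4=\frac{1}{\dist_3+\dist_4},
\]
and $\interpkNN(x)=\sum_i w_if_i/\sum_i w_i$. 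If the precise convention in \Cref{sec:kNN} differs (for example, a different $k$), the same argument should go through with the obvious changes.

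As in the Laplacian case, the edges of $\graphx$ on which the Lipschitz quotient can change are $(x_2,x)$ and $(x,x_3)$. By symmetry it suffices to bound $\frac{1}{\dist_2}\abs{\interpkNN(x)-f_2}$. Since the weights sum to one after normalization,
\[
\frac{\interpkNN(x)-f_2}{\dist_2}=\frac{\sum_{i\neq 2} w_i (f_i-f_2)}{\dist_2\sum_i w_i}.
\]
I will bound each $\abs{f_i-f_2}$ by $\lip{f}$ times a graph distance in $\graph$, using $\dist_{\graph}(x_2,x_3)\le \dist_2+\dist_3$ as in the proof of \Cref{thm:lipschitz_lap_G_Gx}. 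This gives
\[
\abs{f_1-f_2}\le \dist_1\lip{f},\qquad \abs{f_3-f_2}\le(\dist_2+\dist_3)\lip{f},\qquad \abs{f_4-f_2}\le(\dist_2+\dist_3+\dist_4)\lip{f}.
\]

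Substituting these bounds, the numerator is at most
\[
\lip{f}\left(\frac{\dist_1}{\dist_1+\dist_2}+1+\frac{\dist_2}{\dist_3}+1+\frac{\dist_2}{\dist_3+\dist_4}\right).
\]
The denominator equals
\[
\dist_2\sum_i w_i = 1+\frac{\dist_2}{\dist_1+\dist_2}+\frac{\dist_2}{\dist_3}+\frac{\dist_2}{\dist_3+\dist_4}.
\]
The key comparison uses two facts: $\frac{\dist_1}{\dist_1+\dist_2}\le 1$, and the denominator is at least $1$ and contains the terms $\frac{\dist_2}{\dist_3}$ and $\frac{\dist_2}{\dist_3+\dist_4}$. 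Together these show the numerator is at most $3\lip{f}$ times the denominator, giving $\frac{1}{\dist_2}\abs{\interpkNN(x)-f_2}\le 3\lip{f}$. The same bound with indices reflected handles the edge $(x,x_3)$. All other edges of $\graphx$ carry differences of $f$ itself, so the quotient there is already at most $\lip{f}$.

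The main obstacle is bookkeeping rather than analysis: fixing the exact $k$NN convention (which neighbors, and which distances define the weights) and organizing the terms so that the comparison with $3\times$ the denominator is transparent. The conceptually delicate point is the term involving $f_3$ and $f_4$. There the distance passes through the removed edge, and one must use $\dist_{\graph}(x_2,x_3)\le\dist_2+\dist_3$ rather than equality. The same inequality will also guide a sharpness example: take $\dist_2$ large relative to $\dist_3$, with $f$ increasing steeply from $x_2$ to $x_4$.
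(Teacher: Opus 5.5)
Your proposal is correct and is essentially the paper's proof. The paper forms the same quotient $(\interpkNN(x)-f_2)/\rho_2$ with denominator $A=\rho_2\rho_3\sum_i w_i$, and bounds the $f_1-f_2$, $f_3-f_2$, $f_4-f_2$ terms separately by $\lip{f}$ using your three distance bounds together with $A\ge\rho_3$, $A\ge\rho_2+\rho_3$ and $A\ge\rho_3+\rho_2\rho_3/(\rho_3+\rho_4)$; this is your comparison with three times the denominator carried out term by term.

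One correction to your closing aside on sharpness: the extremal case needs $\rho_2\to 0$ (i.e.\ $x\to x_2$) rather than $\rho_2\gg\rho_3$, with $f_1-f_2$, $f_3-f_2$, $f_4-f_2$ all of one sign and saturating the Lipschitz bounds, as in the paper's example $f_1=1$, $f_2=0$, $f_3=1$, $f_4=2$. When $\rho_2\gg\rho_3$, your numerator-to-denominator ratio tends to $1$ on $(x_2,x)$, and on $(x,x_3)$ data increasing through $x_3$ partly cancels.
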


\begin{proof}
    Referring to the formula \eqref{eq:kNNpathformula} in \Cref{sec:kNN} and collecting the given function values,
the $k$NN interpolant is given by
\begin{equation}\label{eq:kNNpathinterp}
   \interpkNN(x)
   =
   \frac{1}{A}\left(
   \frac{\dist_2 \dist_3}{\dist_1 + \dist_2}f_1
   +\dist_3 f_2
   +\dist_2 f_3
   +\frac{\dist_3\dist_2}{\dist_3 + \dist_4}f_4\right),
\end{equation}
where we have defined $\rho_i$ as before and
\begin{equation}\nonumber
A
=
\frac{\dist_2 \dist_3}{\dist_1 + \dist_2}
   +\dist_3
   +\dist_2
   +\frac{\dist_3\dist_2}{\dist_3 + \dist_4}.
\end{equation}
Thus we get
\begin{align*}
\interpkNN(x) - f_2 & = \frac{1}{A}\left(
   \frac{\dist_2 \dist_3}{\dist_1 + \dist_2}f_1
   +\dist_3 f_2
   +\dist_2 f_3
   +\frac{\dist_3\dist_2}{\dist_3 + \dist_4}f_4 - Af_2\right)\\
   & = \frac{1}{A}\left(
   \frac{\dist_2 \dist_3}{\dist_1 + \dist_2}(f_1-f_2)
   +\dist_2 (f_3 - f_2)
   +\frac{\dist_3\dist_2}{\dist_3 + \dist_4}(f_4 - f_2)\right),
\end{align*}
which gives
\begin{equation}\label{eq:knnlipexpr}
    \frac{\interpkNN(x) - f_2}{\dist_2} =
   \frac{\dist_3}{\dist_1 + \dist_2}\frac{(f_1-f_2)}{A}
   +\frac{(f_3 - f_2)}{A}
   +\frac{\dist_3}{\dist_3 + \dist_4}\frac{(f_4 - f_2)}{A}.
\end{equation}
Note that $\dist_3/A \leq 1$, so the first term is bounded by $\lip{f}$:
\begin{equation}\nonumber
    \left |\frac{\dist_3}{\dist_1 + \dist_2}\frac{(f_1-f_2)}{A}\right| \leq \frac{|f_1-f_2|}{\dist_1 + \dist_2} <  \frac{|f_1-f_2|}{\dist_1} \leq \lip{f}.
\end{equation}
Also, since $A >  \dist_2+\dist_3$, for the second term we have
\begin{equation}\nonumber
    \left|\frac{(f_3 - f_2)}{A}\right| < \frac{|f_3-f_2|}{\dist_2+\dist_3} \leq \lip{f}.
\end{equation}
 For the last term, note that $A>\dist_3 + \frac{\dist_2\dist_3}{\dist_3+\dist_4}$, so we get
\begin{equation}\nonumber
    \left|\frac{\dist_3}{\dist_3 + \dist_4}\frac{(f_4 - f_2)}{A}\right| <\frac{\dist_3}{\dist_3 + \dist_4}\frac{|f_4 - f_2|}{\left(\dist_3 + \frac{\dist_2\dist_3}{\dist_3+\dist_4}\right)} =  \frac{|f_4 - f_2|}{\dist_2+\dist_3+\dist_4} \leq \lip{f}.
\end{equation}
The result follows.
\end{proof}

\begin{example}\label{example:knnlip}
To see that \Cref{thm:lipschitz_knn_Gx_G} is sharp, consider the example with $x_1=1$, $x_2=2$, $x_3=3$, $x_4=4$ with $f_1=1$, $f_2=0$, $f_3=1$, $f_4=2$, giving $\lip{f}=1$. Let $x = x_2+\epsilon$, and note that as $\epsilon\rightarrow 0$ we have
\[\dist_2\rightarrow 0,\quad A\rightarrow \dist_3\rightarrow \dist(x_2,x_3).\]
Applying these to \eqref{eq:knnlipexpr}, we get
\[\frac{\interpkNN(x) - f_2}{\dist_2} \rightarrow 3.\]
\end{example}

Lastly, we consider the random walk interpolant.

\begin{theorem}\label{thm:lipschitz_rwlap_Gx_G}
Let $f$ be a function on $\graph$, $x\in\metspace$, and consider $\graphx$ as described above. The Lipschitz constant for the interpolant $\interp{\rwlap}$ on $\graphx$ is bounded by a multiple of the Lipschitz constant for $f$ on $\graph$:
\begin{equation}\nonumber
\lip{\interp{\rwlap}}
\leq
3\lip{f}.
\end{equation}
\end{theorem}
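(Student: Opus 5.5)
The plan is to follow the template of \Cref{thm:lipschitz_lap_G_Gx} and \Cref{thm:lipschitz_knn_Gx_G}, working directly with the normal equation for \eqref{eq:rational_function_weighted} rather than the full closed form. The edges of $\graphx$ other than $(x_2,x)$ and $(x,x_3)$ are edges of $\graph$ with the same lengths and the same data. So it suffices to show
\[
\frac{|\interp{\rwlap}(x)-f_2|}{\dist_2}\le 3\lip{f}
\qquad\text{and}\qquad
\frac{|\interp{\rwlap}(x)-f_3|}{\dist_3}\le 3\lip{f},
\]
with $\dist_1,\dots,\dist_4$ as in the proof of \Cref{thm:lipschitz_lap_G_Gx}.

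\textbf{Step 1 (reduce the energy).} In $\wspd$ the energy is $\sum_v (\lap g(v))^2/d_v$, where $g$ is the data extended by $g(x)=y$. Only the terms at $x_2$, $x$ and $x_3$ depend on $y$. I introduce the slopes
\[
a=\frac{f_2-f_1}{\dist_1},\quad c=\frac{f_3-f_4}{\dist_4},\quad m=\frac{f_3-f_2}{\dist_2+\dist_3},\quad u=\frac{y-f_2}{\dist_2},\quad v=\frac{f_3-y}{\dist_3}.
\]
Then $\lap g(x_2)=a-u$, $\lap g(x)=u-v$ and $\lap g(x_3)=v+c$. The degrees are $d_2=1/\dist_1+1/\dist_2$, $d_x=1/\dist_2+1/\dist_3$ and $d_3=1/\dist_3+1/\dist_4$. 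The bounds $|a|,|c|\le\lip{f}$ are immediate. Also $|m|\le\lip{f}$ because $\dist_\graph(x_2,x_3)\le\dist_2+\dist_3$, exactly as used in \Cref{thm:lipschitz_lap_G_Gx}.

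\textbf{Step 2 (normal equation).} Differentiating in $y$ uses $du/dy=1/\dist_2$ and $dv/dy=-1/\dist_3$. The factor $d_x$ in the middle term cancels, giving
\[
-p(a-u)+(u-v)-q(v+c)=0,\qquad p=\frac{1}{\dist_2 d_2}=\frac{\dist_1}{\dist_1+\dist_2},\quad q=\frac{\dist_4}{\dist_3+\dist_4},
\]
with $p,q\in(0,1)$. Set $\beta=\dist_2/\dist_3$. The constraint $\dist_2u+\dist_3v=f_3-f_2$ gives $v=(1+\beta)m-\beta u$. Substituting and solving yields
\[
u=\frac{pa+qc+(1+\beta)(1+q)m}{1+p+\beta(1+q)}.
\]

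\textbf{Step 3 (bound).} The coefficients on $a$, $c$ and $m$ are all positive. Hence $|u|\le\lip{f}\,N/Q$, where $N=p+q+(1+\beta)(1+q)$ and $Q=1+p+\beta(1+q)$. A direct check gives $N-Q=2q$. Since $Q\ge1$ and $q<1$, it follows that $N/Q=1+2q/Q<3$. The bound for $v=(f_3-\interp{\rwlap}(x))/\dist_3$ follows symmetrically, by swapping the roles of $(x_1,x_2)$ and $(x_4,x_3)$, which interchanges $p$ and $q$ and replaces $\beta$ by $1/\beta$.

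The main obstacle is the algebra in Step 2: the weighted norm must collapse to this clean linear relation in the slope variables. I expect the cancellation of $d_x$ to be what makes it work. Sharpness would be illustrated afterwards by letting $\dist_1\to0$, $\dist_4\to\infty$ and $\dist_2\to0$, so that $p\to0$, $q\to1$ and $\beta\to0$, with data chosen so that $a$, $c$ and $m$ all equal $\lip{f}$ in sign.
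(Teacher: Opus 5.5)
Your proposal is correct and is essentially the paper's argument: the paper also writes $(\interp{\rwlap}(x)-f_2)/\dist_2$ as a combination with nonnegative coefficients of the slopes $(f_2-f_1)/\dist_1$, $(f_3-f_2)/(\dist_2+\dist_3)$, $(f_3-f_4)/\dist_4$. It obtains the bound $1+2\frac{\dist_3}{A}\frac{\dist_4}{\dist_3+\dist_4}$, which is exactly your $1+2q/Q$ since $A=\dist_3 Q$. The only differences are that you derive this slope form directly from the weighted normal equation (via the cancellation of $d_x$) rather than by rearranging the closed-form appendix formula, and that you spell out the $x_3$ side by symmetry, which the paper leaves implicit.
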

\begin{proof}
Referring to the formula in \Cref{subsec:formula_path_subgraph_laplacian},
the random walk spline is given by
\begin{equation}\nonumber
   \interp{\rwlap}(x)
   = \frac{1}{A}\left(
   -\frac{\dist_2 \dist_3}{\dist_1+\dist_2}f_1
   +2\dist_3f_2
   +2\dist_2f_3
   -\frac{\dist_2 \dist_3}{\dist_3+\dist_4}f_4\right),
\end{equation}
where
\begin{equation}\label{eq:Arw}
A = \frac{\dist_1 \dist_3}{\dist_1+\dist_2} +(\dist_2+\dist_3) +\frac{\dist_2\dist_4}{\dist_3+\dist_4}.
\end{equation}
Rearranging gives
\begin{align}\nonumber
\begin{split}
   \interp{\rwlap}(x) &= \frac{1}{A}\left(-\frac{\dist_2 \dist_3}{\dist_1+\dist_2}f_1
   +\dist_3f_2 +\dist_3 f_2
   +\dist_2f_3 +\dist_2 f_3
   -\frac{\dist_2 \dist_3}{\dist_3+\dist_4}f_4\right) \\
   & = \frac{1}{A}\left(-\frac{\dist_2 \dist_3}{\dist_1+\dist_2}f_1
   +\dist_3\frac{\dist_1+\dist_2}{\dist_1+\dist_2}f_2 +\dist_3 f_2
   +\dist_2f_3 +\dist_2 \frac{\dist_3+\dist_4}{\dist_3+\dist_4}f_3
   -\frac{\dist_2 \dist_3}{\dist_3+\dist_4}f_4\right) \\
   & = \frac{1}{A}\left(\frac{\dist_2 \dist_3}{\dist_1+\dist_2}(f_2 - f_1)
   +\left(\frac{\dist_1\dist_3}{\dist_1+\dist_2} +\dist_3\right)f_2
   +\left(\frac{\dist_2\dist_4}{\dist_3+\dist_4} +\dist_2\right)f_3
   +\frac{\dist_2 \dist_3}{\dist_3+\dist_4}(f_3-f_4)\right)
\end{split}
\end{align}
Thus we get
\begin{align}\nonumber
\begin{split}
    \frac{\interp{\rwlap}(x) - f_2}{\dist_2}
    &= \frac{1}{A}\left(\frac{\dist_3}{\dist_1+\dist_2}(f_2 - f_1)
   +\left(\frac{\dist_1\dist_3}{\dist_1+\dist_2} +\dist_3\right)\frac{f_2}{\dist_2}
   +\left(\frac{\dist_4}{\dist_3+\dist_4} +1\right)f_3
   +\frac{\dist_3}{\dist_3+\dist_4}(f_3-f_4)\right) - \frac{1}{A}\frac{Af_2}{\dist_2} \\
&= \frac{1}{A}\left(\frac{\dist_3}{\dist_1+\dist_2}(f_2 - f_1)
   +\left(\frac{\dist_1\dist_3}{\dist_1+\dist_2} +\dist_3 - A\right)\frac{f_2}{\dist_2}
   +\left(\frac{\dist_4}{\dist_3+\dist_4} +1\right)f_3
   +\frac{\dist_3}{\dist_3+\dist_4}(f_3-f_4)\right),
\end{split}
\end{align}
and note that the $f_2$ term simplifies to
\begin{equation}\nonumber
\left(-\frac{\dist_4}{\dist_3+ \dist_4} - 1\right)f_2,
\end{equation}
which results in
\begin{align}
\begin{split}\label{eq:rwproof_est2}
\frac{\interp{\rwlap}(x) - f_2}{\dist_2}
    &= \frac{1}{A}\left(\frac{\dist_3}{\dist_1+\dist_2}(f_2 - f_1)
   +\left(\frac{\dist_4}{\dist_3+\dist_4} +1\right)(f_3 - f_2)
   +\frac{\dist_3}{\dist_3+\dist_4}(f_3-f_4)\right) \\
    &= \frac{1}{A}\left(\frac{\dist_3\dist_1}{\dist_1+\dist_2}\frac{f_2 - f_1}{\dist_1}
   +\left(\frac{\dist_4}{\dist_3+\dist_4} +1\right)(\dist_2+\dist_3)\frac{f_3 - f_2}{\dist_2+\dist_3}
   +\frac{\dist_3\dist_4}{\dist_3+\dist_4}\frac{f_3-f_4}{\dist_4}\right).
\end{split}
\end{align}
This gives the estimate
\begin{align}
\begin{split}\label{eq:rwproof_finalestimate}
\frac{|\interp{\rwlap}(x) - f_2|}{\dist_2}
    &\leq
    \lip{f}
    \frac{1}{A}\left(\frac{\dist_3\dist_1}{\dist_1+\dist_2}
   +\left(\frac{\dist_4}{\dist_3+\dist_4} +1\right)(\dist_2+\dist_3)
   +\frac{\dist_3\dist_4}{\dist_3+\dist_4}\right) \\
   & =
   \lip{f}
   \frac{1}{A}\left(A + \frac{2\dist_3\dist_4}{\dist_3+\dist_4}\right) =   \lip{f}
   \left(1 + 2\frac{\dist_3}{A}\frac{\dist_4}{\dist_3+\dist_4}\right) < 3\lip{f},
\end{split}
\end{align}

where in the last line we used $\dist_3<A$.
\end{proof}

\begin{example}\label{example:rwlip}
To show that \Cref{thm:lipschitz_rwlap_Gx_G} is sharp we must consider a somewhat extreme example. We will use a path graph as before, but this time the points will be highly nonuniform, with $x_1$ and $x_2$ close together, but $x_3$ and $x_4$ far apart. Let $0<h\leq 1$, and consider $x_1=2-h$, $x_2=2$, $x_3=3$, $x_4=3+1/h$, and let $f_1=1-h$, $f_2=1$, $f_3=2$, $f_4=2-1/h$, which always gives $\lip{f}=1$. In fact, in this case the first inequality in \eqref{eq:rwproof_finalestimate} is an equality, giving
\begin{equation}\label{eq:rwlipexampleeqn}
\frac{\interp{\rwlap}(x) - f_2}{\dist_2} = 1 + 2\frac{\dist_3}{A}\frac{\dist_4}{\dist_3+\dist_4},
\end{equation}
where $A$ is from \eqref{eq:Arw}.

The evaluation point $x$ will be chosen carefully to achieve the desired bound. Note that to make \eqref{eq:rwlipexampleeqn} as large as possible, we wish $A$ to be as small as possible. Noting that in this case $\dist_2+\dist_3$ = 1, this means we want the left and right terms of \eqref{eq:Arw} to be small - for the right we simply need $\dist_2$ small, and for the left term we require $\dist_1$ to be much smaller than $\dist_2$. Motivated by this, let $x=x_2+\epsilon$, which gives $\dist_1=h$, $\dist_2 = \epsilon$, $\dist_3=1-\epsilon$, and $\dist_4=1/h$. Note that $\dist_2+\dist_3 = 1$, and let $\epsilon =\sqrt{h}$. Putting these values into \eqref{eq:Arw}, we get $\lim_{h\rightarrow 0}A = 1$ and $\lim_{h\rightarrow 0} \dist_3\dist_4/(\dist_3+\dist_4) = 1$, which from \eqref{eq:rwlipexampleeqn} yields
\begin{equation}\nonumber
\lim_{h\rightarrow 0} \frac{\interp{\rwlap}(x) - f_2}{\dist_2} = 3.
\end{equation}

\noindent See \Cref{subfig:rwlipexample} for plots of some these interpolants over the entire interval $(x_2,x_3)$ for various values of $h$.
\end{example}

\begin{figure}[ht!]
\centering
\begin{subfigure}[b]{0.45\textwidth}
\includegraphics[width=\textwidth]{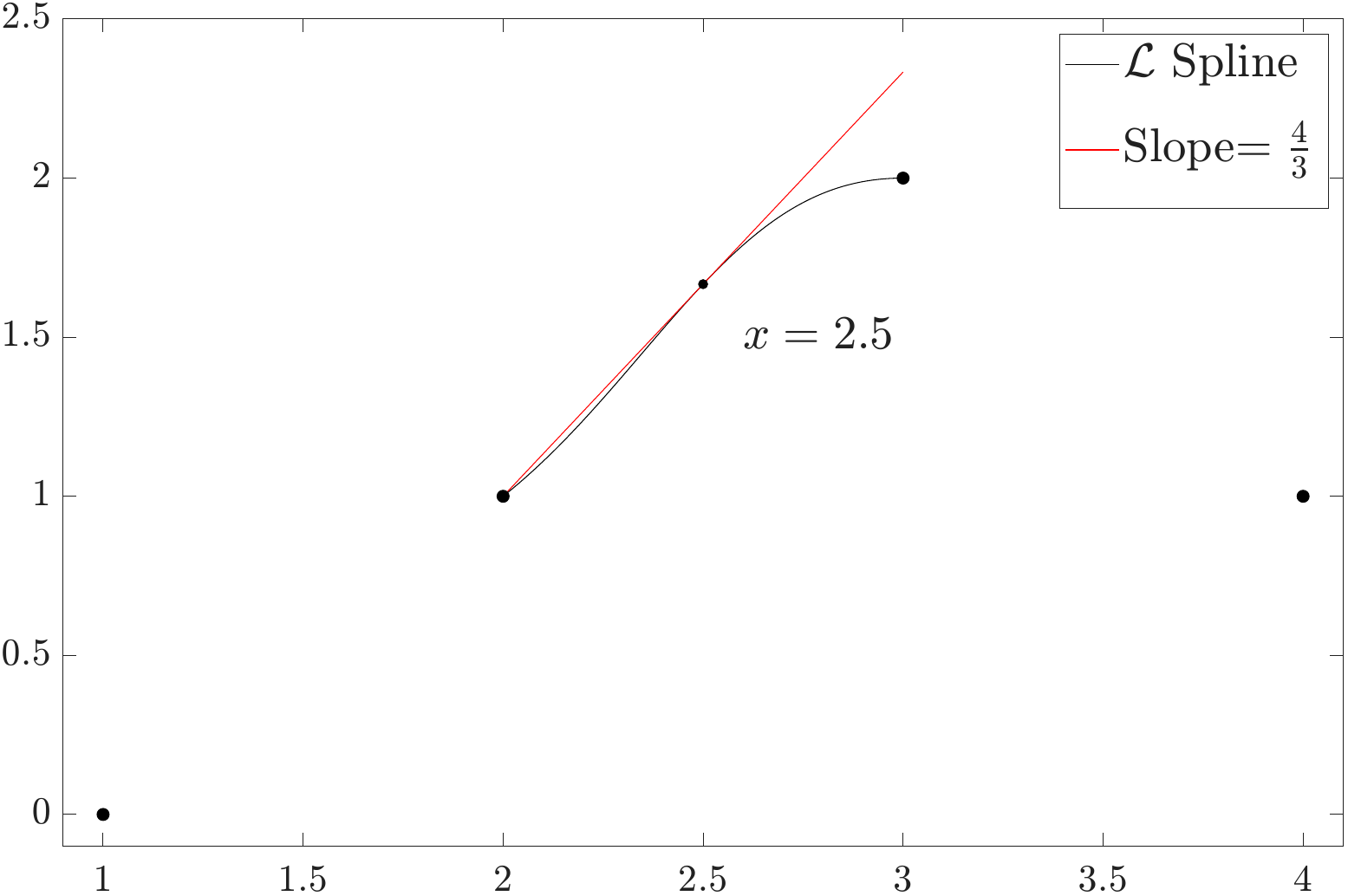}
\caption{The standard Laplacian spline for \Cref{example:stdlip}.}
\label{subfig:stdlipexample}
\end{subfigure}
\hfill
\begin{subfigure}[b]{0.45\textwidth}
\includegraphics[width=\textwidth]{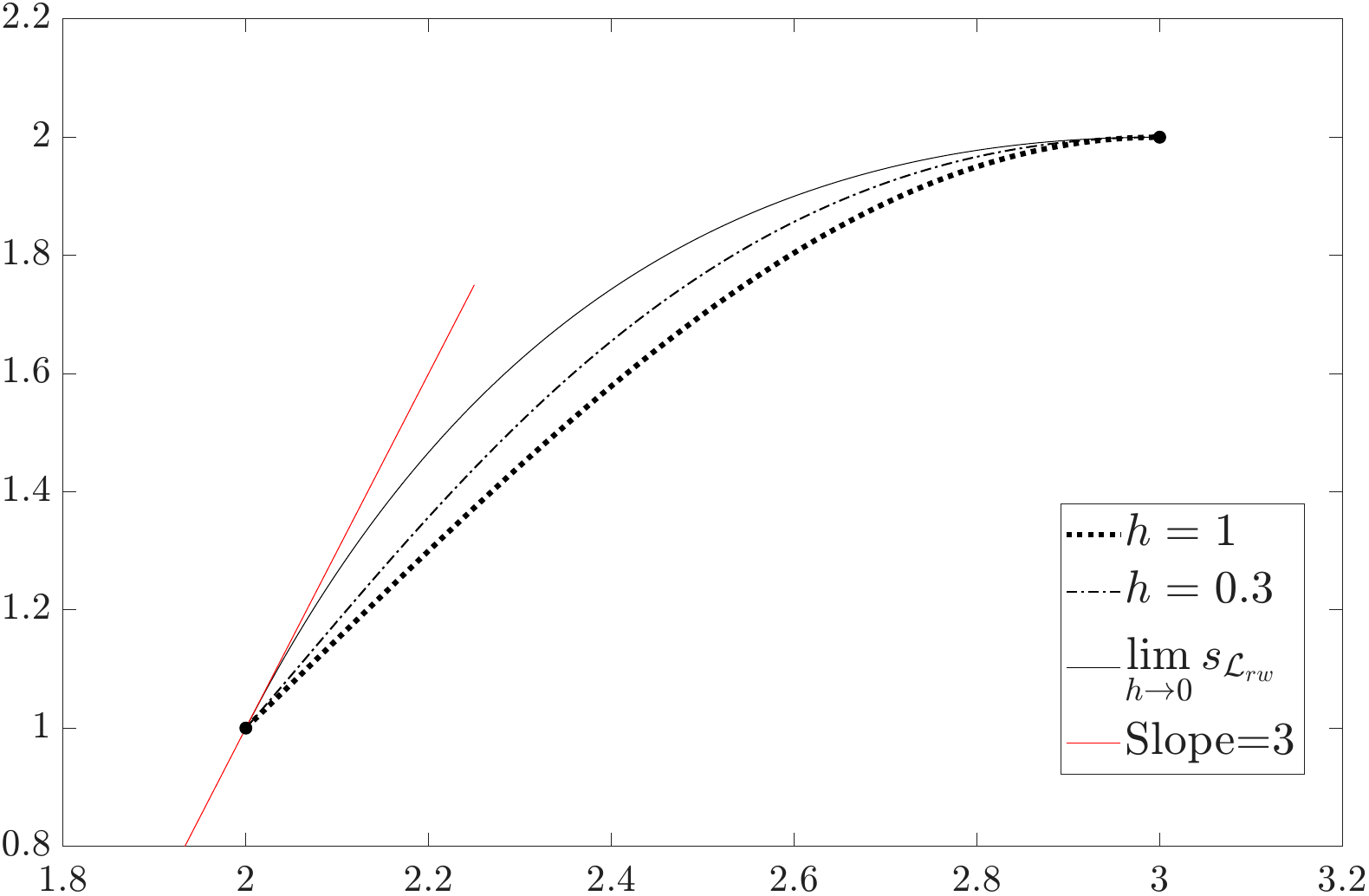}
\caption{The random walk splines for \Cref{example:rwlip}.}
\label{subfig:rwlipexample}
\end{subfigure}
\caption{Examples illustrating some extreme cases for the Lipschitz constants of the spline interpolants on $\graphx$ (when the Lipschitz constant for the graph data is $1$). The standard Laplacian spline $\interp{\lap}$ from \Cref{example:stdlip} is plotted in (a). The results from \Cref{example:rwlip} are in (b), where we see the random walk spline $\interp{\rwlap}$ for various values of $h$. Recall that when $h=1$, the nodes are the same as those in (a), and as $h$ shrinks the left two nodes get closer together while the right two nodes move further apart. }
\label{fig:lipexamples}
\end{figure}

The sharpness construction in \Cref{example:rwlip} is qualitatively different from the corresponding examples for the standard Laplacian and \knn interpolants, relying on a highly nonuniform distribution of data sites. This suggests that the sharp bound for the random walk interpolant appears to be driven by a highly nonuniform sampling geometry. Although the random walk and \knn interpolants have the same worst-case Lipschitz magnification factor of $3$, the random walk bound may be overly pessimistic when the data sites are distributed more uniformly.

\subsection{Infinity norm of one-point interpolation operator}

In this section we briefly investigate stability by discussing the Lebesgue constants for the one point interpolants, and here we deviate from the simple path cases and consider more general graphs. We begin with brief mentions of the $k$NN and standard Laplacian stability estimates, and then discuss the random walk case in more detail.

It is easy to see that the \knn interpolant is very well behaved in this regard, since the interpolation value will always be a convex combination of function values, so we immediately get
\begin{align}
  \abs{\interpkNN(x)}
  \leq
  \|f_X\|_\infty.
\end{align}

The standard Laplacian interpolant is, in general, unbounded in the infinity norm. This can be easily seen in the path case from the last section (see \eqref{eq:stdpathinterp}) by choosing $\rho_1$ and $\rho_4$ small with $\rho_2=\rho_3$ fixed. It is worth noting that this example relies on the data sites being highly nonuniform. We anticipate that if the data site uniformity is controlled, then the Lebesgue constant for the standard Laplacian spline will be bounded. We leave investigating this to future work.

Like the standard Laplacian spline, the random walk spline is defined as the minimizer of an energy functional. One might therefore expect its stability properties to depend on the distribution of the data. However, the next result shows that the random walk spline admits a uniformly bounded Lebesgue constant, independent of the geometry of the data sites.

\begin{theorem}\label{thm:rwthm}
Let $\graph$ be a graph with vertex set $X$, let $x$ be a point in the metric space not from $X$, and let $\graphx$ be a graph with vertex set $\{x\}\cup X$. Then the one-point spline interpolant for the random walk Laplacian satisfies
\begin{equation}\label{eq:rwthm_finalest}
    |\interp{\rwlap}(x)|
    \leq
    \norm{f_X}_\infty + \max_{y\in X} \left|\sum_{v\sim y,v\in X}\frac{\weight{v}{y}}{d_y}(f(y) - f(v))\right|
    \leq
    3\|f_X\|_\infty.
\end{equation}
\end{theorem}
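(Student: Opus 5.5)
The plan is to solve the one-dimensional least-squares problem \eqref{eq:lst_squares} for $\derivative=\rwlap$ in the $\wspd$ norm explicitly. The solution should split into a convex combination of data values plus a correction term built from the random walk Laplacian of $f$ on $X$. Each piece is then bounded separately.

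\emph{Writing out the energy.} Since $\langle g,g\rangle_{\wspd}=g^\top D g$ and $\rwlap=D^{-1}\lap$, the energy is
\[
E(y)=\sum_{u}\frac{(\lap g(u))^2}{d_u},
\]
where $g$ equals $f$ on $X$ and $y$ at $x$, and $d_u$ are degrees in $\graphx$. Only $u=x$ and the neighbors $v\sim x$ (all of which lie in $X$) involve $y$. By \eqref{eq:lapf_expanded},
\[
\lap g(x)=d_x y-\sum_{v\sim x}\weight{x}{v}f(v),
\]
and for $v\sim x$ we get $\lap g(v)=\weight{v}{x}(f(v)-y)+a_v$. Here
\[
a_v:=\sum_{u\sim v,\,u\in X}\weight{u}{v}(f(v)-f(u))
\]
collects the terms coming from edges inside $X$.

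\emph{Solving for $y$.} Setting $E'(y)=0$ should give
\[
y\Bigl(d_x+\sum_{v\sim x}\frac{\weight{x}{v}^2}{d_v}\Bigr)
=\sum_{v\sim x}\Bigl(\weight{x}{v}+\frac{\weight{x}{v}^2}{d_v}\Bigr)f(v)
+\sum_{v\sim x}\frac{\weight{x}{v}}{d_v}a_v .
\]
Since $d_x=\sum_{v\sim x}\weight{x}{v}$, the denominator $S:=\sum_{v\sim x}\bigl(\weight{x}{v}+\weight{x}{v}^2/d_v\bigr)$ is exactly the sum of the coefficients of the $f(v)$. So $\interp{\rwlap}(x)$ is a convex combination of the values $f(v)$, $v\sim x$, plus $S^{-1}\sum_{v\sim x}(\weight{x}{v}/d_v)\,a_v$. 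I expect this algebra to be the main care point: the degrees $d_v$ include the new edge to $x$, and one must check that the weights match up so that the first part is genuinely convex. Once the formula is in this form, the rest is straightforward.

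\emph{Bounding the two parts.} The convex combination is at most $\|f_X\|_\infty$ in absolute value. For the correction term, $a_v/d_v$ equals the sum appearing in \eqref{eq:rwthm_finalest} at $y=v$, so $|a_v/d_v|$ is at most the stated maximum; call it $M$. Using $S\ge d_x=\sum_{v\sim x}\weight{x}{v}$, the correction is bounded by
\[
\frac{\sum_{v\sim x}\weight{x}{v}\,M}{S}\le M .
\]
This gives the first inequality in \eqref{eq:rwthm_finalest}.

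For the second inequality, $\sum_{u\in X,\,u\sim v}\weight{u}{v}\le d_v$, so
\[
|a_v|/d_v\le \max_{u}|f(v)-f(u)|\le 2\|f_X\|_\infty ,
\]
which yields the total bound $3\|f_X\|_\infty$. This estimate still holds whether $d_y$ in the statement is read as the degree in $\graph$ or in $\graphx$, since the sum is then a sub-convex combination of differences of $f$ in either case.
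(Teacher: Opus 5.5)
Your proposal is correct and follows essentially the same route as the paper. Setting $E'(y)=0$ is exactly the normal equation \eqref{eq:rational_function_weighted}, and it produces the paper's explicit formula: denominator $d_x+\sum_{v\sim x}\weight{x}{v}^2/d_v\ge d_x$, split into a convex combination of neighbor values plus a random-walk-Laplacian correction, each bounded as in the paper. Your justification of the final bound $3\|f_X\|_\infty$ (each correction term is a sub-convex combination of differences, hence at most $2\|f_X\|_\infty$) fills in a step the paper only asserts.
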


\begin{proof}
From \eqref{eq:rational_function_weighted}, the formula for the interpolant is
\begin{equation}\label{eq:rwthm_interpform}
\interp{\rwlap}(x)
=
\frac{-\lap_x^{\top}D^{-1}\lap_X
f_X}{\lap_x^{\top}D^{-1}\lap_x},
\end{equation}
where the reader will recall that for the one point interpolant $\lap_{\uverts} = \lap_x$ represents the column of the standard graph Laplacian for $\graphx$ associated with $x$, $\lap_{\kverts} = \lap_X$ represents the columns at the data sites, and $D$ is the diagonal degree matrix. Focusing on the denominator, we have
\begin{equation}\nonumber
\parenth{D^{-1}\lap_x}_y
=
\begin{cases}
1
& y=x\\
-\frac{\weight{x}{y}}{d_y}
& y\in X\\
\end{cases},
\end{equation}
where $d_y$ denotes vertex degree, \eqref{eq:vertex_degree_definition}.
This gives
\begin{equation}\label{eq:rwthm_denomest}
\lap_x^\top \parenth{D^{-1}\lap_x}
=
d_x
+
\sum_{y\sim x}
\frac{\weight{x}{y}^2}{d_y}
\geq
d_x
\end{equation}

Regarding the numerator, note that the entries of $D^{-1}\lap_X f_X$ are given by
\begin{equation}\nonumber
\parenth{D^{-1}\lap_X f_X}_y
=
\begin{cases}
f(y)
-
\displaystyle\sum_{v\sim y, v\in X}\frac{\weight{v}{y}}{d_y} f(v)
& y\in X\\
-\displaystyle \sum_{v\sim y, v\in X}\frac{\weight{v}{y}}{d_y} f(v)
& y= x.
\end{cases}
\end{equation}
This leads to
\begin{align}
\lap_x^\top \parenth{D^{-1}\lap_X f_X}
&=
d_x
\parenth{-\sum_{v\sim x, v\in X}\frac{\weight{v}{x}}{d_x} f(v)}
+
\sum_{y\in X}
-\weight{x}{y}
\parenth{
f(y) -\sum_{v\sim y, v\in X}\frac{\weight{v}{y}}{d_y} f(v) }\nonumber\\
&=
-\sum_{v\sim x, v\in X} \weight{v}{x} f(v)
+
\sum_{y\in X}
-\weight{x}{y}
\parenth{ f(y) -
\sum_{v\sim y, v\in X}\frac{\weight{v}{y}}{d_y} f(v) }\nonumber\\
&=
-\sum_{y\sim x, y\in X} \weight{y}{x} f(y)
+
\sum_{y\in X}
-\weight{x}{y}
\parenth{ f(y) -
\sum_{v\sim y, v\in X}\frac{\weight{v}{y}}{d_y} f(v) }\label{eq:rwthm_numeq1},
\end{align}
where in the last line we simply relabeled indices. Note that part of the rightmost term in \eqref{eq:rwthm_numeq1} can be rewritten as
\begin{align}\nonumber
\begin{split}
f(y) -\sum_{v\sim y, v\in X}\frac{\weight{v}{y}}{d_y} f(v)
&=
\frac{d_yf(y)}{d_y} -\sum_{v\sim y, v\in X}\frac{\weight{v}{y}}{d_y}f(v)\\
&=
\frac{1}{d_y}\parenth{w(x,y)+\sum_{v\sim y,v\in X}\weight{v}{y}}f(y) -\sum_{v\sim y, v\in X}\frac{\weight{v}{y}}{d_y}f(v)\\
&=
\frac{\weight{x}{y}}{d_y}f(y) + \sum_{v\sim y, v\in X}\frac{\weight{v}{y}}{d_y}(f(y) - f(v)).
\end{split}
\end{align}
Inserting this into \eqref{eq:rwthm_numeq1} gives
\begin{equation}\nonumber
\lap_x^\top \parenth{D^{-1}\lap_X f_X}
=
-\sum_{y\sim x, y\in X} \parenth{\weight{y}{x} + \frac{\weight{x}{y}^2}{d_y}}f(y)
-
\sum_{y\in X}\weight{x}{y}\sum_{v\sim y, v\in X}\frac{\weight{v}{y}}{d_y}(f(y) - f(v)).
\end{equation}
Bounding this in absolute value gives
\begin{equation}\nonumber
|\lap_x^\top \parenth{D^{-1}\lap_X f_X}|
\leq
\parenth{d_x + \sum_{y\sim x, y\in X} \frac{\weight{x}{y}^2}{d_y}}\norm{f_X}_\infty
+
d_x  \max_{y\in X} \left|\sum_{v\sim y,v\in X}\frac{\weight{v}{y}}{d_y}(f(y) - f(v))\right|,
\end{equation}
and combining this with \eqref{eq:rwthm_interpform} and $\eqref{eq:rwthm_denomest}$ gives
\begin{equation}\nonumber
    |\interp{\rwlap}(x)|
    \leq
    \norm{f_X}_\infty + \max_{y\in X} \left|\sum_{v\sim y,v\in X}\frac{\weight{v}{y}}{d_y}(f(y) - f(v))\right|  \leq 3\norm{f_X}_\infty.
\end{equation}
\end{proof}
We note that \eqref{eq:rwthm_finalest} includes the estimate
\begin{equation}\nonumber
    |\interp{\rwlap}(x)|
    \leq
    \norm{f_X}_\infty + \max_{y\in X} \left|\sum_{v\sim y,v\in X}\frac{\weight{v}{y}}{d_y}(f(y) - f(v))\right|.
\end{equation}
Consequently, when the sampled function is smooth the interpolation value should be much smaller than the worst-case bound $3\norm{f_X}_\infty$.

\section{Experiments on Interpolation}
\label{sec:experiments}

In our experiments\footnote{Our experiments were conducted in Python, and our code will be available on ArXiv.
},
we consider the manifold of positive definite matrices whose geometry
is based on the Cholesky decomposition, \cite{lin2019riemannian}.  These experiments assume that we have a finite collection of positive definite matrices, and we want to estimate some function in a neighborhood of those matrices.

We recall some notation and definitions from \cite{lin2019riemannian}, and we refer the reader there for additional information on the complete manifold structure. The strictly lower triangular part of a matrix $A$ is denoted
$\floor{A}$. The diagonal matrix extracted from $A$ is
$\mathbb{D}(A)$. The Frobenius norm of $A$ is denoted
$\norm{A}_F$.
For two positive definite, Cholesky-factored matrices $A=RR^\top$ and $B=SS^\top$, the geodesic distance between them is
\begin{equation}\nonumber
\dist_\metspace(A,B)
=
\set{
\norm{\floor{R}-\floor{S} }_F^2
+
\norm{ \log \mathbb{D}(R) -\log \mathbb{D}(S) }_F^2
}^{1/2}.
\end{equation}
Note that geodesics are computed in terms of Cholesky factors.
A geodesic starting at $A$, in the direction $\Gamma$ (a lower triangular matrix), is derived from the parametrized Cholesky factor
\begin{equation}\nonumber
\floor{R} +t \floor{\Gamma} + \mathbb{D}(R)
\exp \set{ t \mathbb{D}(\Gamma) \mathbb{D}(R)^{-1}}.
\end{equation}

\subsection{Path Interpolation}
\label{subsec:experiment_path}

Here, we consider interpolation on the path graph (\Cref{fig:graph_example_path})
embedded on a geodesic.
The underlying metric space is the manifold of symmetric positive definite matrices of size
$200\times 200$, and we interpolate the radial function
\begin{equation}\label{eq:target}
f_B(A) = \cos(\dist_\metspace(A,B)),
\end{equation}
with \emph{center} $B$, after adding noise to the data. We compare the spline and
$k$NN interpolants.

For a single experiment, we start by randomly generating the function center $B$, an initial vertex/point $x_1$, and a direction $\Gamma$ defining the geodesic. The remaining vertices are equally spaced along the geodesic with spacing 0.25. A fine grid between the central vertices $x_2,x_3$ is defined for evaluation points.
Before interpolation, we add random noise to the function data at the vertices $X$ of the orignal graph, from a uniform distribution on
$[-0.5\gamma,0.5\gamma]$,
where $\gamma$ denotes a chosen noise level. After interpolating the values on the fine grid, we compute the MSE between the true function values and the interpolated values.

For each noise level $\gamma$, we run 50 iterations with random points and random noise on the function values. The average error
for each noise level is recorded and plotted as a function of
$\gamma$ in in \Cref{fig:error_path}. Note that in this experiment,
the standard and random walk splines gave similar results, so of the two, only
the random walk spline is included.

\subsection{Planar Interpolation}
\label{subsec:experiment_triangle}

\begin{figure}[ht!]
\centering
\input{figures/planar1}
\caption{\textbf{Planar graph}. Circles represent vertices where function values are known. The rectangle represents the vertex where we want to interpolate the function value.
}
\label{fig:graph_example_triangulation}
\end{figure}

This experiment is similar to the path case, except we are now on  a two-dimensional submanifold and working on a planar graph (\Cref{fig:graph_example_triangulation}). To construct the graph, we randomly generate the vertex $x_1$ and a random direction $\Gamma_1$. From this, we define $x_4,x_2,x_7$, where the four points are equally spaced on a geodesic, with spacing 0.25. We repeat this process for a second randomly generated direction from $x_1$ to $x_3$
to obtain $x_5$ and $x_8$. Finally, we compute the direction from $x_2$ to $x_3$ to define a third geodesic and fill in the remaining graph vertices. Distances between adjacent vertices
$\dist_\metspace(x_i,x_j)$ are all 0.25, except
$\dist_\metspace(x_2,x_3)$. The evaluation points live on a fine grid in the triangular interior to $x_1,x_2,x_3$. Noise is added to a randomly placed function \eqref{eq:target}
and interpolation errors are averaged as in the path case. Results are displayed in \Cref{fig:error_triangle}.

\begin{figure}
     \centering
     \begin{subfigure}[b]{0.48\textwidth}
         \centering
         \includesvg[width=\textwidth]{figures/error_path_noTitle.svg}
         \caption{Path graph, see  \Cref{fig:graph_example_path}.}
         \label{fig:error_path}
     \end{subfigure}
     \hfill
     \begin{subfigure}[b]{0.48\textwidth}
         \centering
         \includesvg[width=\textwidth]{figures/error_triangle_noTitle.svg}
         \caption{Planar graph, see \Cref{fig:graph_example_triangulation}.}
         \label{fig:error_triangle}
     \end{subfigure}
        \caption{\textbf{Interpolation error} for smooth data with added noise on graphs in the manifold of positive semi-definite matrices, where $\gamma$ indicates the level of the noise.}
        \label{fig:experiment_error}
\end{figure}

\subsection{Discussion}\label{subsec:discussion}

 At low noise levels, the graph-based interpolants in both examples achieve lower errors than $k$NN interpolation. As the noise level increases, the averaging effect of $k$NN becomes advantageous, and eventually the error for the spline interpolants approaches or exceeds that of $k$NN. These behaviors are consistent with the spline methods incorporating local smoothness through energy minimization, while $k$NN relies primarily on local averaging. The random walk spline, however, stands out in a way that is not as readily predicted from this distinction. In the planar experiment, for example, at approximately $\gamma =1.3$, the standard Laplacian spline error starts to grow beyond that of the $k$NN interpolant. However, the random walk spline performs comparably to or better than $k$NN in our experiments for all noise levels considered. These examples, along with the stability result in \Cref{thm:rwthm}, suggest that the random walk spline may offer a useful compromise between capturing trends in the data and the noise-averaging behavior of $k$NN interpolation, although further experiments would be needed to assess this behavior more broadly.

\bibliographystyle{plain}
\bibliography{refs}

\appendix

\section{Formulas for Graph Interpolants}
\label{sec:formulas}

Using the normal equations \eqref{eq:rational_function}, we find explicit formulas for the graph interpolants in terms of the distances between the vertices and function values. These are primarily used for our theoretical analysis.

\subsection{First-order Derivatives -- Short Path Subgraph}

For first-order derivatives, we are mainly concerned with interpolation along an edge in $\graph$, so we consider $\subgraphx$ as in \Cref{fig:graph_example_shortpath}.  The distances are denoted $\dist_1=\dist_{\subgraphx}(x,x_1)$ and $\dist_2=\dist_{\subgraphx}(x,x_2)$. we assume the matrix $M$ has the differences aligned in the same direction for the two edges. The formulas are

\begin{equation}\nonumber
\interp{\fodlapfactor}(x)
=
\frac{f_1 \dist_2 +f_2\dist_1}{\dist_1 +\dist_2},
\end{equation}
\begin{equation}\nonumber
\interp{\fodgrad}(x)
=
\frac{f_1 \dist_2^2 +f_2\dist_1^2}{\dist_1^2 +\dist_2^2}.
\end{equation}

\subsection{Laplacian -- Path Subgraph}
\label{subsec:formula_path_subgraph_laplacian}

For the Laplacian interpolant $\interp{\lap}$, the relevant subgraph
$\subgraphx$ contains neighbors of neighbors.  If we interpolate along an edge in a path in $\graph$, the subgraph $\subgraphx$ is \Cref{fig:graph_example_path}.
The distances are denoted
$\dist_1=\dist_{\subgraphx}(x_1,x_2)$,
$\dist_2=\dist_{\subgraphx}(x_2,x)$,
$\dist_3=\dist_{\subgraphx}(x_3,x)$,
$\dist_4=\dist_{\subgraphx}(x_3,x_4)$.
In this case, if we index the rows and columns as $x_1,x_2,x,x_3,x_4$, the standard Laplacian becomes
\begin{equation}\label{eq:stdlap_path}
\lap
=
\begin{bmatrix}
\frac{1}{\dist_1} & -\frac{1}{\dist_1} & 0 & 0 & 0\\
-\frac{1}{\dist_1} & \frac{\dist_1+\dist_2}{\dist_1\dist_2} & -\frac{1}{\dist_2} &0 & 0\\
0 & -\frac{1}{\dist_2} &
\frac{\dist_2+\dist_3}{\dist_2\dist_3}& -\frac{1}{\dist_3} & 0\\
0 & 0 & -\frac{1}{\dist_3} &
\frac{\dist_3+\dist_4}{\dist_3\dist_4}& -\frac{1}{\dist_4}\\
0 & 0 & 0 & -\frac{1}{\dist_4} & \frac{1}{\dist_4}
\end{bmatrix}
\end{equation}
Note that the zeros in $\lap_{\uverts}$ (the column associated with $x$) mean that we need only concern ourselves with rows $x_2,x,x_3$ of the Laplacian when considering the normal equation \eqref{eq:rational_function}. We multiply by the common factor $\dist_2\dist_3$ and restrict to these rows to get
\begin{equation}\nonumber
(\dist_2\dist_3)\lap_{\uverts}
=
\begin{bmatrix}
 - \dist_3  \\
 \dist_2+\dist_3\\
 - \dist_2 \\
\end{bmatrix}
\quad\mbox{and}\quad
(\dist_2\dist_3)
\lap_{\kverts}
=
\begin{bmatrix}
-\frac{\dist_2\dist_3}{\dist_1} & \frac{(\dist_1 +\dist_2)\dist_3}{\dist_1}  &0 & 0\\
0 & -\dist_3  & - \dist_2 & 0\\
0 & 0 &  \frac{(\dist_3+\dist_4)\dist_2}{\dist_4} & -\frac{\dist_2\dist_3}{\dist_4}\\
\end{bmatrix}
\end{equation}
From this, we can use the normal equations \eqref{eq:rational_function} to deduce the formula
\begin{align}\begin{split}\label{eq:pathlapinterp_appendix}
\interp{\lap}(x)
&=
\frac{1}{\dist_2^2+(\dist_2+\dist_3)^2+\dist_3^2}
\begin{bmatrix}
f_1 & f_2 & f_3 & f_4
\end{bmatrix}
\begin{bmatrix}
-\frac{\dist_2\dist_3^2}{\dist_1}\\
\frac{\dist_2\dist_3^2}{\dist_1}
+  (\dist_2+2\dist_3)\dist_3\\
\frac{\dist_3\dist_2^2}{\dist_4}
+ (2\dist_2+\dist_3)\dist_2\\
- \frac{\dist_2^2\dist_3}{\dist_4}
\end{bmatrix},
\end{split}\end{align}
where $f_i = f(x_i)$.
\subsection{Random Walk -- Path Subgraph}
\label{subsec:formula_path_subgraph_rwlap}

Using $\rwlap = D^{-1}\lap$, the normal equations
can be recast as \eqref{eq:rational_function_weighted}.
As before, we may restrict to the $x_2,x,x_3$ rows of the standard Laplacian as in \eqref{eq:stdlap_path}, and note that the relevant portion of the inverse of the degree matrix is
\begin{equation}\nonumber
D^{-1} =
\begin{bmatrix}
0 & \frac{\dist_1\dist_2}{\dist_1+\dist_2} & 0 &0 & 0\\
0 & 0 &
\frac{\dist_2\dist_3}{\dist_2+\dist_3}& 0 & 0\\
0 & 0 & 0 &
\frac{\dist_3\dist_4}{\dist_3+\dist_4}& 0\\
\end{bmatrix}
\end{equation}
Using this with \eqref{eq:stdlap_path}, get the following identities
\begin{equation}\nonumber
\lap_U
=
\frac{1}{\dist_2\dist_3}\begin{bmatrix}
 - \dist_3  \\
 \dist_2+\dist_3\\
 - \dist_2
\end{bmatrix},
\quad
D^{-1}\lap_U=
\begin{bmatrix}
-\frac{\dist_1}{\dist_1+\dist_2} \\
1\\
 - \frac{\dist_4}{\dist_3+\dist_4}
\end{bmatrix},\quad
\lap_{\kverts}
=\frac{1}{\dist_2\dist_3}
\begin{bmatrix}
-\frac{\dist_2\dist_3}{\dist_1} & \frac{(\dist_1 +\dist_2)\dist_3}{\dist_1}  &0 & 0\\
0 & -\dist_3  & - \dist_2 & 0\\
0 & 0 &  \frac{(\dist_3+\dist_4)\dist_2}{\dist_4} & -\frac{\dist_2\dist_3}{\dist_4}
\end{bmatrix}
\end{equation}
Now, we compute the components of the normal equations, and we get
\begin{align*}
(\dist_2\dist_3)
\lap_{\uverts}^\top
D^{-1}
\lap_{\uverts}
&=
\frac{\dist_1\dist_3}{\dist_1+\dist_2}
+
(\dist_2+\dist_3)
+
\frac{\dist_2\dist_4}{\dist_3+\dist_4},
\end{align*}
\begin{align*}
(\dist_2\dist_3)
\lap_{\uverts}^\top
D^{-1}
\lap_{\kverts}
&=
\begin{bmatrix}
\frac{\dist_2\dist_3}{\dist_1+\dist_2} & -2\dist_3 & -2\dist_2 & \frac{\dist_2\dist_3}{\dist_3+\dist_4}
\end{bmatrix}^\top.
\end{align*}
Therefore the interpolant is the rational function
\begin{align}\begin{split}\label{eq:pathrwlapinterp_appendix}
\interp{\rwlap}(x)
&=
-
\frac{1}{\frac{\dist_1\dist_3}{\dist_1+\dist_2}
+
(\dist_2+\dist_3)
+
\frac{\dist_2\dist_4}{\dist_3+\dist_4}}
\begin{bmatrix}
f_1 & f_2 & f_3 & f_4
\end{bmatrix}
\begin{bmatrix}
\frac{\dist_2\dist_3}{\dist_1+\dist_2}\\
-2\dist_3 \\
-2\dist_2\\
\frac{\dist_2\dist_3}{\dist_3+\dist_4}
\end{bmatrix}.
\end{split}\end{align}

\subsection{$k$NN, Path Subgraph}\label{sec:kNN}

The $k$NN interpolant for the path subgraph in \Cref{fig:graph_example_path} is simply a weighted average with weights determined by the distance of the nodes from $x$. This yields the formula below.
\begin{align}\label{eq:kNNpathformula}
\interpkNN(x)
&=
\frac{1}{(\dist_1+\dist_2)^{-1}+ \dist_2^{-1}+\dist_3^{-1} +(\dist_3+\dist_4)^{-1}}
\parenth{
(\dist_1+\dist_2)^{-1} f_1
+
\dist_2^{-1} f_2
+
\dist_3^{-1}f_3
+
(\dist_3+\dist_4)^{-1}f_4
}\nonumber\\
&=
\frac{1}{
\frac{\dist_2\dist_3}{\dist_1+\dist_2}
+ \dist_3+\dist_2
+\frac{\dist_2\dist_3}{\dist_3+\dist_4}}
\parenth{
\frac{\dist_2\dist_3}{\dist_1+\dist_2} f_1
+
\dist_3 f_2
+
\dist_2 f_3
+
\frac{\dist_2\dist_3}{\dist_3+\dist_4} f_4
}
\end{align}
Note the similarities and subtle differences to the formula for the random walk spline interpolant in \eqref{eq:pathrwlapinterp_appendix}.

\end{document}

%% file: macros.tex
\DeclareMathOperator*{\argmin}{arg\,min}
\providecommand{\abs}[1]{\left\lvert#1\right\rvert}
\providecommand{\norm}[1]{\left\rVert#1\right\rVert}
\providecommand{\set}[1]{ \left\{ #1  \right\}  }

\providecommand{\parenth}[1]{\left( #1 \right) }

\providecommand{\floor}[1]{\left\lfloor #1 \right\rfloor }

\providecommand{\dist}{\rho}

\providecommand{\vertices}{\mathcal{V}}
\providecommand{\uverts}{U}
\providecommand{\kverts}{K}

\providecommand{\lap}{\mathcal{L}}
\providecommand{\lap}{\bar{\mathcal{L}}} 
\providecommand{\nlap}{\lap_{n}}

\providecommand{\rwlap}{\lap_{rw}}

\providecommand{\wspd}{\ell_2(\vertices,D)}

\providecommand{\interp}[1]{s_{#1}} 
\providecommand{\interplinear}{s_{lin}} 
\providecommand{\interpkNN}{s_{\mbox{$k$NN}}} 
\providecommand{\knn}{$k$NN } 
\providecommand{\lip}[1]{C\left(#1\right)} 

\newcommand{\graph}{\mathcal{G}} 
\newcommand{\graphx}{\graph_x} 
\newcommand{\subgraphx}{\mathcal{H}_x} 
\newcommand{\verticesx}{\vertices_x} 
\newcommand{\edges}{E} 

\newcommand{\metspace}{{\mathcal{X}}} 
\newcommand{\localnodes}{X} 
\newcommand{\nodes}{X} 
\newcommand{\derivative}{\Psi} 

\newcommand{\fodgrad}{\nabla} 
\newcommand{\fodlapfactor}{D_r} 
\newcommand{\weight}[2]{w(#1,#2)}

%% file: figures/graph.tikzstyles

\tikzstyle{circle}=[fill=white, draw=black, shape=circle, scale=.7]
\tikzstyle{shadedcircle}=[fill={rgb,255: red,170; green,170; blue,170}, draw=black, shape=circle, scale=.8]
\tikzstyle{blueretangle}=[fill=blue, draw=black, shape=rectangle]
\tikzstyle{rectangle}=[fill=white, draw=black, shape=rectangle]

\tikzstyle{dashes}=[-, fill=white, draw=black, dashed]
\tikzstyle{bold}=[-, fill=white, line width=.6mm]

%% file: figures/planar1.tex
\tikzmath{
   \gap=2;
   \slope=-1;
   \xc = 0.65;
   \yc = 0.65;
   \xx = -\gap/sqrt(2);
   \yy = \gap+sqrt(2);
   \xxx = 2+sqrt(2);
   \yyy = -sqrt(2);
}
\begin{tikzpicture}
\Vertex[x=\xc,y=\yc,label=$x$,shape=rectangle,opacity=0,size=\vertexsize]{x}  

\Vertex[x=0,y=0,label=$x_1$,opacity=0,size=\vertexsize]{x1}
\Vertex[x=-\gap,y=0,
label=$x_4$,opacity=0,size=\vertexsize]{x4}
\Vertex[x=\gap,y=0,
label=$x_2$,opacity=0,size=\vertexsize]{x2}
\Vertex[x=2*\gap,y=0,
label=$x_7$,opacity=0,size=\vertexsize]{x7}

\Vertex[x=0,y=-\gap,
label=$x_5$,opacity=0,size=\vertexsize]{x5}
\Vertex[x=0,y=\gap,
label=$x_3$,opacity=0,size=\vertexsize]{x3}
\Vertex[x=0,y=2*\gap,
label=$x_8$,opacity=0,size=\vertexsize]{x8}

\Vertex[x=\xx,y=\yy,
label=$x_9$,opacity=0,size=\vertexsize]{x9}
\Vertex[x=\xxx,y=\yyy,
label=$x_6$,opacity=0,size=\vertexsize]{x6}



%

%

\Edge[](x)(x1)
\Edge[](x)(x2)
\Edge[](x)(x3)

\Edge[](x1)(x2)
\Edge[](x1)(x3)
\Edge[](x2)(x3)

\Edge[](x1)(x4)
\Edge[](x1)(x5)

\Edge[](x2)(x6)
\Edge[](x2)(x7)

\Edge[](x3)(x8)
\Edge[](x3)(x9)

\end{tikzpicture}